\newtheorem{theorem}{Theorem}[section]
\newtheorem{proposition}[theorem]{Proposition}
\newtheorem{corollary}[theorem]{Corollary}
\newtheorem{lemma}[theorem]{Lemma}
\newtheorem{remark}[theorem]{Remark}
\newtheorem{problem}[theorem]{Problem}
\DeclareMathOperator{\height}{h}
\DeclareMathOperator{\im}{Im}
\DeclareMathOperator{\re}{Re}
\newcommand{\C}{\mathbb{C}}
\newcommand{\Q}{\mathbb{Q}}
\newcommand{\R}{\mathbb{R}}
\newcommand{\Z}{\mathbb{Z}}
\newcommand{\CC}{\mathcal{C}}
\newcommand{\OO}{\mathcal{O}}
\newcommand{\gerp}{\mathfrak{p}}
\newcommand{\barv}{{\bar{v}}}
\title{Skolem Problem for Linear Recurrence Sequences with~$4$ Dominant Roots\\[0.5cm]
\large after Mignotte, Shorey, Tijdeman, Vereshchagin and Bacik}
\author{Yuri BILU\footnote{Supported by the ANR project JINVARIANT}}
\date{\today}
\numberwithin{equation}{section}
\begin{document}
\hfuzz=4pt

\maketitle

\begin{abstract}
It is an exposition of the work of  Mignotte, Shorey, Tijdeman, Ve\-resh\-cha\-gin and Bacik on decidability of the vanishing problem for linear recurrence sequences of order~$4$. 
\end{abstract}

 {\footnotesize
 
\tableofcontents

}

\section{Introduction}

A map ${U:\Z_{\ge 0}\to \bar \Q}$ is called a Linear Recurrence Sequence (LRS in the sequel) or order~$r$ if there exist algebraic number ${a_0, \ldots, a_{r-1}\in \bar \Q}$ 
$$
U(n+r)=a_{r-1}U(n+r-1) +\cdots +a_0U(n) \qquad (n \in \Z_{\ge 0}). 
$$
The order~$r$ is called \textit{minimal} if~$U$ is an LRS of order~$r$, but not of order ${r-1}$. In the sequel, we will always assume that the order is minimal. In particular, this implies that the coefficients ${a_0, \ldots, a_{r-1}}$ are uniquely defined, and that ${a_0\ne 0}$. 

The polynomial ${\chi(T)=T^r-a_{r-1}T^{r-1}-\cdots-a_0\in \bar\Q[T]}$ is called the \textit{characteristic polynomial} of the LRS~$U$. Let ${\lambda_1, \ldots, \lambda_s\in\bar\Q }$ be the distinct roots of~$\chi(T)$. They will also be called the \textit{roots} of~$U$. Since ${a_0\ne 0}$, we have ${\lambda_1, \ldots, \lambda_s \ne 0}$. 

Let
${\chi(T) =(T-\lambda_1)^{r_1}\cdots (T-\lambda_s)^{r_s}}$
be the factorization of the characteristic polynomials. Then $U(n)$ can be expressed as 
\begin{equation}
\label{ebinet}
U(n) =f_1(n) \lambda_1^n + \cdots+ f_s(n) \lambda_s^n, 
\end{equation}
where  ${f_1(T), \ldots, f_s(T) \in \bar\Q[T]}$ are polynomials; if the order is minimal, their degrees  are given by 
${\deg f_j=r_j-1}$. 
Expression~\eqref{ebinet} is known as the \textit{Binet formula}.

The classical Skolem-Mahler Theorem asserts that a non-degenerate LRS may have at most finitely zeros; that is, equation ${U(n)=0}$ has at most finitely many solutions in  ${n\in \Z_{\ge0}}$. However, all known proofs of this theorem are not constructive, and decidability of  the following two famous problems are widely open.

\begin{problem}[Weak Skolem Problem]
Decide whether a given  non-degenerate LRS vanishes; that is, whether its set of zeros is non-empty.    
\end{problem}

\begin{problem}[Strong Skolem Problem]
Find the full set of zeros of a given non-degenerate LRS. 
\end{problem}

Clearly, decidability of the Strong Skolem  Problem 
would imply that of the Weak Skolem Problem. 
It is known~\cite{BLNOPW22} that decidability of the Strong Skolem Problem for simple\footnote{The LRS~$U$ is called \textit{simple} if ${r_1=\cdots=r_s=1}$, that is,  ${\lambda_1, \ldots, \lambda_s \ne 0}$ are simple roots of the characteristic polynomial $\chi(T)$; equivalently, the LRS is simple if ${r=s}$. In this case, polynomials $f_1, \ldots, f_s$ from the Binet formula are constant. }
 LRS  
follows from some widely believed conjectures. 

Most of the unconditional results in favor of decidability of the Strong Skolem Problem 
for some classes of LRS use the technique of dominant roots.  Let~$K$ be a number field containing the roots ${\lambda_1, \ldots, \lambda_s}$, and let ${v\in M_K}$ be an absolute value of~$K$. (See Subsection~\ref{ssav} for our conventions on absolute values.) Let us re-number the roots to satisfy 
$$
|\lambda_1|_v\ge |\lambda_2|_v \ge \cdots \ge |\lambda_s|_v.
$$
We say that the LRS $U$ has a \textit{dominant root} (with respect to~$v$) if 
$$
|\lambda_1|_v> |\lambda_2|_v \ge \cdots \ge |\lambda_s|_v.
$$
In  this case it is obvious that
$$
|f_1(n) \lambda_1^n|_v  > |f_2(n) \lambda_2^n + \cdots+ f_s(n) \lambda_s^n|_v
$$
for ${n\ge n_0}$, where ${n_0>0}$ can be expressed explicitly in terms of~$U$. Hence ${U(n)\ne 0}$ for ${n\ge n_0}$. We have proved the following.

\begin{proposition}
\label{prtriv}
The  Strong Skolem Problem is decidable for a non-degenerate LRS having a dominant root with respect to some ${v\in M_K}$. 
\end{proposition}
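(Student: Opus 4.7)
The proposition is essentially already proved by the paragraph preceding it; all that remains is to make the threshold $n_0$ effectively computable from the defining data $(a_0,\ldots,a_{r-1})$ of the recurrence. Once $n_0$ is explicit, the zero set of $U$ is determined by evaluating $U(0), U(1), \ldots, U(n_0-1)$ in $K$ and testing each for equality with $0$; both operations are algorithmic in a number field.

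First I would rewrite the Binet formula as
$$U(n) = \lambda_1^n \Bigl(f_1(n) + R(n)\Bigr), \qquad R(n) := \sum_{j=2}^{s} f_j(n)(\lambda_j/\lambda_1)^n,$$
and set $\rho := |\lambda_2/\lambda_1|_v < 1$. The quantities $\rho$, the polynomials $f_j$, and the ratios $\lambda_j/\lambda_1$ are all computable from the input: the roots $\lambda_j$ come from factoring $\chi(T)$, and the $f_j$ are determined by the initial conditions $U(0), \ldots, U(r-1)$ via a linear system. The (ultra)triangle inequality then yields $|R(n)|_v \leq \rho^n P(n)$ for an explicit polynomial $P$. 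Simultaneously, I would bound $|f_1(n)|_v$ from below: in the archimedean case this is the elementary fact that a nonzero polynomial of degree $r_1-1$ is eventually $\geq c\cdot n^{r_1-1}$ with $c>0$ effectively computable, so $|f_1(n)|_v$ beats $\rho^n P(n)$ for all $n \geq n_0$ with $n_0$ explicit.

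The one technical wrinkle is the non-archimedean case, where $|f_1(n)|_v$ can dip close to $0$ whenever $n$ is $v$-adically close to one of the (at most $r_1-1$) roots of $f_1$. The fix is that, for each such root $\alpha \in \bar\Q$, the set of integers $n$ within a given $v$-adic distance of $\alpha$ is an effectively computable (possibly empty) union of arithmetic progressions, so the exceptional $n$ are parametrized explicitly and can be inspected separately. Combining these estimates yields an effective $n_0$ and hence decidability; the main obstacle, such as it is, amounts to routine bookkeeping in the non-archimedean setting.
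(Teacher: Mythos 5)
Your approach matches the paper's: the paper proves this proposition in the paragraph immediately preceding it, asserting that the dominant term wins for $n \geq n_0$ with $n_0$ effective, and you are expanding the details of the same argument. You are right to flag the subtlety that the paper's word ``obvious'' glosses over: in the non-archimedean case $|f_1(n)|_v$ is not bounded away from $0$, since $n$ can be $v$-adically close to a root of $f_1$. However, your proposed fix --- parametrize the exceptional $n$ by arithmetic progressions and ``inspect them separately'' --- does not obviously yield a finite procedure, since those arithmetic progressions are infinite and you do not explain how the $n$ in them are disposed of. The natural and effective repair, consistent with the machinery the paper introduces, is Liouville's inequality (Proposition~\ref{prliou}): since $\height(f_1(n)) \leq C_1 \log n$ for effective $C_1$ (because $\height(n) = \log n$ and the coefficients of $f_1$ are fixed; cf.\ the bound~\eqref{ehfj} used later in the paper), Liouville gives $|f_1(n)|_v \geq n^{-C_2}$ for effective $C_2$. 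This polynomial lower bound loses to the exponential decay of $\rho^n$ only when $n$ is below an effectively computable threshold $n_0$, uniformly for archimedean and non-archimedean $v$. So your overall strategy is the right one and the same as the paper's; only the technical resolution at the finite places should be replaced by the Liouville estimate.
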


Proposition~\ref{prtriv} is trivial, but it admits a quite non-trivial generalization. 
Let ${k\in \{1, \ldots, s\}}$. We say that the LRS $U$ has \textit{exactly~$k$ dominant roots} (with respect to~$v$) if 
$$
|\lambda_1|_v=\cdots= |\lambda_k|_v >|\lambda_{k+1}|_v\ge \cdots \ge |\lambda_s|_v. 
$$
We say that it has \textit{at most~$k$ dominant roots} if it has exactly~$\ell$ dominant roots for some ${\ell\le k}$.  Equivalently,~$U$has at most~$k$ dominant roots if ${|\lambda_1|_v>|\lambda_{k+1}|_v}$.

The following theorem was proved independently by  Mignotte, Shorey and Tijdeman~ \cite{MST84} and by Vereshchagin~ \cite{Ve85}. 

\begin{theorem}
\label{thmstv}
Let~$U$ be a non-degenerate LRS and~$K$ a number field containing its roots. Assume that one of the following is true:
\begin{itemize}
\item
the LRS~$U$ has most~$2$ dominant roots with respect to  some ${v \in M_K}$, 
\item
or~$U$ has exactly~$3$ dominant roots with respect to some \textbf{infinite} ${v\in M_K}$. 
\end{itemize}
Then the Strong Skolem Problem is decidable for $U$. 
\end{theorem}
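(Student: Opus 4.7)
My plan is to reduce the equation $U(n)=0$, for $n$ large, to an approximate relation among algebraic numbers of $v$-absolute value~$1$, and then invoke an effective lower bound on linear forms in logarithms (Baker's theorem in archimedean form when $v$ is infinite; the $p$-adic version of Yu and van der Poorten when $v$ is finite) to bound~$n$.

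\emph{Setup.} I first re-index the roots so that $|\lambda_1|_v=\cdots=|\lambda_k|_v>|\lambda_{k+1}|_v\ge\cdots$, where $k\in\{1,2,3\}$ is the number of dominant roots with respect to $v$. Dividing the Binet expansion~\eqref{ebinet} by $\lambda_1^n$ turns $U(n)=0$ into
$$
\bigl|f_1(n)+f_2(n)\zeta_2^n+\cdots+f_k(n)\zeta_k^n\bigr|_v \;\le\; C\,n^D\rho^n,
$$
where $\zeta_j:=\lambda_j/\lambda_1$ has $|\zeta_j|_v=1$, $\rho:=|\lambda_{k+1}/\lambda_1|_v<1$, $D:=\max_j\deg f_j$, and $C$ is effectively computable. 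Non-degeneracy of $U$ means no $\zeta_j/\zeta_\ell$ is a root of unity, which is precisely the condition needed to invoke Baker.

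\emph{Case $k\le 2$.} The case $k=1$ is Proposition~\ref{prtriv}. For $k=2$ I would rewrite the displayed bound as
$$
|\zeta_2^n-\alpha(n)|_v\;\le\; C'\,n^{D-\deg f_2}\,\rho^n,\qquad\alpha(n):=-f_1(n)/f_2(n),
$$
and apply Baker to the linear form $L:=n\log\zeta_2-\log\alpha(n)$. Expressing $\alpha(n)$ as an explicit product of the linear factors of $f_1$ and $f_2$ (each of height $O(\log n)$), $L$ becomes a linear form in logarithms whose integer coefficients are bounded by $n$ and whose algebraic numbers have heights $O(\log n)$; the Baker--W\"ustholz estimate (or its $p$-adic analogue) then yields $|L|\ge\exp(-A(\log n)^B)$ for explicit $A,B$. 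Since $|\zeta_2^n-\alpha(n)|_v$ is comparable to $|\alpha(n)|_v\cdot|L|$, comparison with the upper bound forces $n\log(1/\rho)\le A'(\log n)^B$, giving an effective bound on $n$.

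\emph{Case $k=3$, $v$ infinite.} I embed $K\hookrightarrow\C$ via $v$, placing $\zeta_2,\zeta_3$ on the complex unit circle. In the simple case the bound becomes $|c_1+c_2\zeta_2^n+c_3\zeta_3^n|\le C\rho^n$, so solving for $\zeta_3^n$ and using $|\zeta_3^n|=1$ yields
$$
\bigl||c_1+c_2\zeta_2^n|-|c_3|\bigr|\;\le\; C'\rho^n.
$$
The real-valued equation $|c_1+c_2z|=|c_3|$ on the unit circle $|z|=1$ has at most two solutions $u_0^{(1)},u_0^{(2)}$, each algebraic over~$K$. If none exist, there are no large solutions $n$. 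Otherwise a local expansion at $u_0^{(i)}$ (nondegenerate in the generic case) produces $|\zeta_2^n-u_0^{(i)}|\le C''\rho^n$, and Baker bounds $n$ as in the previous case. The polynomial-coefficient case reduces to this after first disposing of the subcases where one $\deg f_j$ strictly exceeds the others---there the term $f_j(n)\lambda_j^n$ dominates in the $v$-modulus, effectively lowering $k$.

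\emph{Main obstacle.} The delicate point is the geometric reduction in the $k=3$ case. I expect to need a careful analysis to ensure that (i) the locus $\{(z,w)\in\C^2:|z|=|w|=1,\;c_1+c_2z+c_3w=0\}$ is genuinely zero-dimensional; (ii) the local expansion at each of its points is first-order, so the $O(\rho^n)$ bound transfers cleanly to $|\zeta_2^n-u_0|$; and (iii) the lower-order polynomial corrections from non-constant $f_j$ do not swamp the exponential decay. Degenerate configurations (tangencies, coincidences among the $c_j$) must be handled separately. It is at precisely this step that the hypothesis ``$v$ infinite'' is essential: the archimedean identity $|\zeta_3^n|=1$ cuts the complex hyperplane $c_1+c_2z+c_3w=0$ down to a finite set on the unit torus, whereas in the non-archimedean setting the sphere $\{|z|_v=1\}$ is far too large for an analogous reduction to succeed.
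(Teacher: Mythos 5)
Your plan follows the same overall strategy as the paper: normalize by $\lambda_1^n$, treat the values $f_j(n)$ as coefficients of height $O(\log n)$, apply Baker's inequality (Matveev in the archimedean case, Yu in the $p$-adic case) to the two-term dominant part when $k\le 2$, and use the geometric ``Beukers trick'' to reduce the three-term dominant part to a Baker bound near the intersection of a line with the unit torus when $k=3$ and $v$ is infinite. Your reduction $\bigl||c_1+c_2\zeta_2^n|-|c_3|\bigr|\le C'\rho^n$ followed by localizing near the (at most two) solutions of $|c_1+c_2z|=|c_3|$ on $|z|=1$ is, after dividing through, exactly the paper's two-circle picture: your $u_0^{(1)},u_0^{(2)}$ are the points $\varsigma,\bar\varsigma$ of Proposition~\ref{pbeuk}.

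That said, the item you flag under ``Main obstacle (ii)'' is a genuine gap, and it is precisely what the paper's Proposition~\ref{pbeuk} is about. The local expansion near $u_0^{(i)}$ is first-order only when the circles $\{|z|=\rho_0\}$ and $\{|z-1|=\rho_1\}$ meet transversally, i.e.\ when $\rho_0+\rho_1>1$. In the tangent case $\rho_0+\rho_1=1$ the expansion is genuinely quadratic, so the bound on $|z_0-z_1|$ only gives $|z_0-\rho_0|\le C\rho^{n/2}$, and you must feed this weakened estimate into Baker. This cannot be brushed aside as a nongeneric degeneracy: the $\rho_j$ depend on the data of the LRS, and the tangent case must be covered to obtain a theorem rather than a statement with exceptions. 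The paper's Proposition~\ref{pbeuk} handles all three regimes ($\rho_0+\rho_1<1$, $=1$, $>1$) explicitly, and it is the quadratic estimate in the tangent case that does the extra work; your sketch would need to incorporate it. Two smaller points: your ``effectively lowering $k$'' reduction when the degrees $\deg f_j$ differ is not needed and adds cases --- the paper sidesteps it by simply allowing the coefficients $\alpha_j:=f_j(n)$ to have height $O(\log n)$ and letting Baker's inequality absorb this; and when the three dominant terms vanish \emph{exactly} (your $\zeta_2^n$ landing on $u_0^{(i)}$), one needs a unit-equation argument (the paper's Proposition~\ref{prunits}, or Corollary~\ref{contoh} in your formulation) rather than Baker, since the linear form is then zero. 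You gesture at this but should make it explicit.
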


Here is an immediate consequence. 

\begin{corollary}
\label{cmstv}
The Strong Skolem Problem is decidable for  a non-degenerate LRS with at most~$3$ distinct roots (that is, with ${s\le 3}$). In particular, it is decidable for LRS of order at most~$3$. 
\end{corollary}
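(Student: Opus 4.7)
The plan is to deduce this directly from Theorem~\ref{thmstv} by a trichotomy on the $v$-adic sizes of the roots, choosing $v$ to be any infinite place of~$K$.

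First I would reduce the ``order at most~$3$'' statement to the ``$s\le 3$'' statement: when the minimal order is ${r\le 3}$, the characteristic polynomial has degree~$r$, hence the number of distinct roots satisfies ${s\le r\le 3}$.

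Now assume ${s\le 3}$. Fix any infinite absolute value ${v\in M_K}$ (which exists, for instance any archimedean place) and re-number the roots so that ${|\lambda_1|_v\ge\cdots\ge|\lambda_s|_v}$. I would then split into three cases according to how many of the top values coincide:
\begin{itemize}
\item If ${|\lambda_1|_v>|\lambda_2|_v}$, then~$U$ has exactly one dominant root with respect to~$v$, in particular at most~$2$.
\item If ${s=3}$ and ${|\lambda_1|_v=|\lambda_2|_v>|\lambda_3|_v}$ (or if ${s=2}$ with ${|\lambda_1|_v=|\lambda_2|_v}$), then~$U$ has at most~$2$ dominant roots with respect to~$v$.
\item If ${s=3}$ and ${|\lambda_1|_v=|\lambda_2|_v=|\lambda_3|_v}$, then~$U$ has exactly~$3$ dominant roots with respect to~$v$, and~$v$ is infinite by construction.
\end{itemize}
In the first two cases the first alternative of Theorem~\ref{thmstv} applies, and in the third case the second alternative applies. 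Either way, the Strong Skolem Problem is decidable for~$U$.

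There is no genuine obstacle: the only point to be careful about is to ensure from the outset that the chosen~$v$ is infinite, so that the third bullet falls inside the scope of Theorem~\ref{thmstv}; picking~$v$ archimedean at the start handles this uniformly. The non-degeneracy hypothesis is inherited from the corollary's assumption and is exactly what Theorem~\ref{thmstv} requires.
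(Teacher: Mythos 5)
Your proposal is correct and is exactly the reasoning the paper leaves implicit when it calls Corollary~\ref{cmstv} ``an immediate consequence'' of Theorem~\ref{thmstv}: fix an archimedean place, sort the roots by $v$-adic size, and observe that every possible configuration with ${s\le 3}$ falls under one of the two alternatives of the theorem, the choice of an infinite~$v$ ensuring the third case is admissible. The only cosmetic gap is the degenerate case ${s=1}$, where ``${|\lambda_1|_v>|\lambda_2|_v}$'' is vacuous; but then~$U$ trivially has a unique dominant root (Proposition~\ref{prtriv} already applies), so nothing is lost.
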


Corollary~\ref{cmstv} was recently refined by Bacik~\cite{Ba24}.

\begin{theorem}
\label{thba}
The Strong Skolem Problem is decidable for  a non-degenerate LRS with at most~$4$ distinct roots; in particular, it is decidable for LRS of order at most~$4$. 
\end{theorem}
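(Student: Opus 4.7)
The plan is to reduce the statement to a single stubborn configuration via the results already at hand, then treat that configuration by a non-archimedean version of the Mignotte--Shorey--Tijdeman--Vereshchagin technique. By Corollary~\ref{cmstv} I may assume that the LRS has exactly ${s=4}$ distinct roots. Applying Theorem~\ref{thmstv} eliminates every situation in which~$U$ has at most~$2$ dominant roots with respect to some ${v\in M_K}$, as well as the situation in which~$U$ has exactly~$3$ dominant roots at some infinite~$v$. The surviving case is the one in which at every infinite ${v\in M_K}$ all four moduli ${|\lambda_1|_v,\ldots,|\lambda_4|_v}$ coincide, while at every finite ${v\in M_K}$ the number of dominant roots is $3$ or~$4$.

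Next I would argue that at least one finite place must actually produce exactly $3$ dominant roots. If not, then at every ${v\in M_K}$ all four moduli coincide, so each ratio ${\lambda_i/\lambda_j}$ satisfies ${|\lambda_i/\lambda_j|_v=1}$ for every ${v\in M_K}$: the finite-place conditions force ${\lambda_i/\lambda_j\in \OO_K^{*}}$, and together with the infinite-place conditions Kronecker's theorem then forces ${\lambda_i/\lambda_j}$ to be a root of unity, contradicting the non-degeneracy of~$U$.

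So only one case remains: ${s=4}$, all archimedean moduli of the roots equal, and some finite place ${v_0\in M_K}$ at which
\begin{equation*}
|\lambda_1|_{v_0}=|\lambda_2|_{v_0}=|\lambda_3|_{v_0}>|\lambda_4|_{v_0}.
\end{equation*}
Dividing ${U(n)=0}$ by $\lambda_1^n$ rewrites the vanishing relation as
\begin{equation*}
f_1(n)+f_2(n)(\lambda_2/\lambda_1)^n+f_3(n)(\lambda_3/\lambda_1)^n=-f_4(n)(\lambda_4/\lambda_1)^n,
\end{equation*}
whose right-hand side tends to~$0$ in the $v_0$-adic metric, while each summand on the left is a polynomial in~$n$ times a $v_0$-adic unit. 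The plan at this point is to mimic the archimedean argument of~\cite{MST84,Ve85}, replacing linear forms in complex logarithms by linear forms in $p$-adic logarithms (in the style of van der Poorten--Schlickewei). The main obstacle -- and Bacik's principal contribution -- is precisely this step: producing an effective $v_0$-adic lower bound on the left-hand side that outruns the geometric decay of the right-hand side, while keeping track of the polynomial factors ${f_j(n)}$ that arise from possible root multiplicities. Such an estimate yields an explicit~$N$ with ${U(n)\ne 0}$ for ${n>N}$, after which the finite set ${n\le N}$ can be checked directly.
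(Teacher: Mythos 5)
Your reduction is sound and in fact reproduces the first half of Bacik's own argument: after Corollary~\ref{cmstv} and Theorem~\ref{thmstv}, the only possibly problematic configuration is ${s=4}$ with all four moduli equal at every infinite place and exactly three dominant roots at some finite place $v_0$ (your Kronecker step ruling out ``all moduli equal at every place'' is correct, since non-degeneracy provides a ratio $\lambda_i/\lambda_j$ that is not a root of unity). The gap is in how you propose to dispose of this configuration. You assert that the missing ingredient is an effective $v_0$-adic lower bound for the three-term sum $f_1(n)+f_2(n)(\lambda_2/\lambda_1)^n+f_3(n)(\lambda_3/\lambda_1)^n$, all of whose terms have comparable $v_0$-adic size, to be obtained from $p$-adic linear forms in logarithms. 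No such bound is known, and this is exactly why Theorem~\ref{thmstv} restricts the three-dominant-root case to infinite $v$: the proof of Proposition~\ref{prbeuk} rests on the archimedean geometry of Proposition~\ref{pbeuk} --- two circles in $\C$ meet in at most two points, so if $z_0$ and $z_1$ are close they must be close to one of finitely many algebraic intersection points, which reduces everything to a two-term expression $\alpha\lambda^n-1$ handled by Proposition~\ref{prbak}. In the ultrametric setting the analogous ``circles'' $\{|z|_{v_0}=\rho_0\}$ and $\{|z-1|_{v_0}=\rho_1\}$ intersect in entire residue classes rather than finitely many points, so Beukers' trick has no $p$-adic counterpart, and Baker--Yu estimates do not apply directly to a genuine three-term sum. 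Attributing such an estimate to Bacik also mischaracterizes his contribution.

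What Bacik actually proves (Lemma~\ref{lba}) is that your ``surviving case'' is vacuous, so no new lower bound is needed at all. The idea is to exploit complex conjugation: embed $K$ in $\C$; since all archimedean moduli agree, $\lambda_1\bar\lambda_1=\lambda_2\bar\lambda_2=\lambda_3\bar\lambda_3=\lambda_4\bar\lambda_4$. For a finite place $v$ and its conjugate place $\barv$, defined by $|x|_\barv:=|\bar x|_v$, this gives that $|\lambda_j|_v\,|\lambda_j|_\barv$ is independent of $j$; but by hypothesis, at each of $v$ and $\barv$ three of the four absolute values are equal and the fourth is at most the others, and these facts combined with the constant product force all four to be equal at $v$. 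Hence all four moduli agree at every place of $K$, Kronecker's Theorem makes every ratio $\lambda_i/\lambda_j$ a root of unity, and non-degeneracy is contradicted. Thus Theorem~\ref{thba} follows at once from Theorem~\ref{thmstv} together with this purely multiplicative local observation, with no non-archimedean input beyond what Proposition~\ref{prbak} already supplies; to repair your proposal you would need to replace the projected $p$-adic estimate by this argument (or prove an estimate that is currently out of reach).
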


In \cite{MST84,Ve85} Theorem~\ref{thba} was proved under the additional hypothesis ${U(n)\in \R}$ for all~$n$.

This note  is an exposition of the proofs of Theorems~\ref{thmstv} and~\ref{thba}. We insist that all the mathematical ideas already appeared elsewhere, notably,  in \cite{Ba24,MST84,Ve85}. Our contribution is purely expository, if any.

\section{Preparatory material}

In Subsections~\ref{ssav} and~\ref{sshe} we give a very brief synopsis of absolute values and heights; more details can be found in standard texts like \cite{BG06,HS00,La83}. In Subsection~\ref{sslog} we introduce our principal tool, Baker's Inequality. Finally, in Subsection~\ref{sstri} we prove an elementary geometric inequality used later in Section~\ref{sthree}. 

\subsection{Absolute values}
\label{ssav}

On the field~$\Q$
of rational numbers there are the ``usual'' absolute value ${|\cdot|}$ and the \textit{$p$-adic} absolute values for each prime number $p$, defined by 
${|x|_p := p^{-\nu_p(x)} }$ for ${x\in \Q^\times}$.
It is common to call the ``usual'' absolute value \textit{infinite absolute value}, and denote it by ${|\cdot|_\infty}$; informally, it is the $p$-adic absolute value for the ``infinite prime'' ${p=\infty}$. 

Denote by~$M_\Q$ the set consisting of all prime numbers and the symbol~$\infty$:
$$
M_\Q:=\{2,3,5,7,11, \dots\}\cup \{\infty\}. 
$$
According to the \textit{Theorem of Ostrowski}, every (non-trivial) absolute value on~$\Q$ is equivalent to a $p$-adic absolute value for some ${p\in M_\Q}$. Note also the \textit{Product Formula}
$$
\prod_{p\in M_\Q}|x|_p=1 \qquad (x\in \Q^\times). 
$$

Now let~$K$ be a number field and $|\cdot|_v$ a (non-trivial) absolute value on~$K$.  By the Theorem of Ostrowski, its restriction to~$\Q$ is equivalent to $|\cdot|_p$ for some ${p\in M_\Q}$; this is denoted as ${v\mid p}$. We normalize $|\cdot|_v$ so that its restriction to~$\Q$ coincides with $|\cdot|_p$; that is, ${|p|_v=p^{-1}}$ if ${v\mid p<\infty}$, and ${|2025|_v=2025}$ if ${v\mid \infty}$.

Denoting by~$M_K$ the set of non-trivial absolute values on~$K$ normalized as indicated, we have the Product Formula 
\begin{equation}
\label{eprofor}
\prod_{v\in M_K}|x|_v^{d_v}=1\qquad (x \in K^\ast),
\end{equation}
where ${d_v:=[K_v:\Q_p]}$ is the local degree of~$v$.

The absolute value $|\cdot|_v$ is called \textit{infinite} if ${v\mid \infty}$, that is, if ${|\cdot|_v}$ extends the infinite absolute value on~$\Q$. The infinite absolute values on~$K$ stay in one-to-one correspondence with the set of real embeddings ${K\stackrel\sigma\hookrightarrow\R}$, and of pairs of complex conjugate embeddings ${K\stackrel{\sigma,\bar\sigma}\hookrightarrow\C}$. If~$v$ corresponds to a real embedding ${K\stackrel\sigma\hookrightarrow\R}$ or, respectively,  to a pair ${K\stackrel{\sigma,\bar\sigma}\hookrightarrow\C}$, then~$v$ is defined by ${|x|_v:=|\sigma(x)|}$ for ${x\in K}$, and the local degree~$d_v$ is~$1$, respectively~$2$. 

If ${v\mid p}$, where~$p$ is a finite prime, then the absolute value $|\cdot|_v$ is called \textit{finite}. For a given rational prime~$p$ the finite absolute values extending ${|\cdot|_p}$ stay in one-to-one correspondence with the primes of~$K$ dividing~$p$. If~$\gerp$ is  such a prime, then the corresponding~${|\cdot|_v}$, normalized as above,  is given by 
\begin{equation}
\label{evp}
|x|_v :=p^{-\nu_\gerp(x)/e}\qquad (x\in K^\times), 
\end{equation}
where ${e=e_\gerp:=\nu_\gerp(p)}$ is the ramification index. If ${f=f_\gerp}$ is the inertia degree, defined by ${\#(\OO_K/\gerp)=p^f}$, then the local degree is given by  ${d_v=ef}$.

Thus, we may write 
$$
M_K= \{\text{primes of $K$}\} \cup \{K\stackrel\sigma\hookrightarrow\R\}\cup\{K\stackrel{\sigma,\bar\sigma}\hookrightarrow\C\}. 
$$
The following \textit{Kronecker Theorem} is fundamental.

\begin{proposition}[Kronecker Theorem]
Let ${x\in K^\times}$ be such that ${|x|_v\le1}$ for all ${v\in M_K}$. Then~$x$ is a root of unity.
\end{proposition}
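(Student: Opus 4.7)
The plan is to follow the classical two-step argument: first show that $x$ is an algebraic integer all of whose Galois conjugates over~$\Q$ lie in the closed unit disc, and then use this to deduce that the sequence $x, x^2, x^3, \ldots$ takes only finitely many values, which forces $x$ to be a root of unity.

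For the algebraic-integer part, I would translate the hypothesis at each finite ${v \in M_K}$: if $v$ corresponds to a prime $\gerp$ lying over a rational prime~$p$, formula~\eqref{evp} reads ${|x|_v = p^{-\nu_\gerp(x)/e}}$, so ${|x|_v \le 1}$ forces ${\nu_\gerp(x) \ge 0}$. Running over all primes of $K$ then yields ${x \in \OO_K}$, hence ${x^n \in \OO_K}$ for every ${n \ge 1}$, and so the minimal polynomial of $x^n$ over~$\Q$ lies in~$\Z[T]$. For the bound on Galois conjugates, I would observe that every embedding ${\sigma : \Q(x) \hookrightarrow \C}$ extends to some ${\tilde\sigma : K \hookrightarrow \C}$, which corresponds to an infinite ${v \in M_K}$ with ${|y|_v = |\tilde\sigma(y)|}$ for all ${y \in K}$; the hypothesis at~$v$ then gives ${|\sigma(x)| \le 1}$ for each conjugate $\sigma(x)$.

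Setting ${d := [K:\Q]}$, the minimal polynomial $P_n(T) \in \Z[T]$ of $x^n$ has degree at most~$d$, and its coefficients, being $\pm$ elementary symmetric polynomials in roots of absolute value ${\le 1}$, are integers bounded by $\binom{d}{k} \le 2^d$. Only finitely many such polynomials can therefore arise as~$n$ varies, and since each has at most $d$ roots, the set ${\{x^n : n \ge 1\} \subset K}$ is finite. Pigeonhole gives ${m > n \ge 1}$ with ${x^m = x^n}$, and ${x \ne 0}$ then yields ${x^{m-n} = 1}$. The main obstacle is more bookkeeping than substance: one must carefully match Galois conjugates of~$x$ with infinite absolute values of~$K$ (using that every embedding of $\Q(x)$ extends to~$K$) and correctly read integrality out of the normalization~\eqref{evp}. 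Once these translations are in place, the argument reduces to the classical fact that an algebraic integer with all conjugates in the closed unit disc is a root of unity.
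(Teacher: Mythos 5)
The paper does not prove this proposition at all: it is stated as a ``fundamental'' fact and no argument is given (the reader is referred to the standard texts cited at the start of Section~2). So there is no proof in the paper to compare against.

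Your proof is the classical Kronecker argument and it is correct and complete. The two steps are sound: the translation of ${|x|_v\le 1}$ at finite places via~\eqref{evp} into ${\nu_\gerp(x)\ge 0}$ for every prime~$\gerp$ of~$K$ gives ${x\in\OO_K}$; the extension of each embedding ${\Q(x)\hookrightarrow\C}$ to an infinite place of~$K$ gives ${|\sigma(x)|\le 1}$ for every conjugate~$\sigma(x)$; and then the minimal polynomial of each~$x^n$ lies in~$\Z[T]$, has degree bounded by~$d$, and has coefficients bounded by~$2^d$ because the conjugates of~$x^n$ (being $n$-th powers of conjugates of~$x$) also lie in the closed unit disc, so only finitely many such polynomials occur, the orbit ${\{x^n:n\ge1\}}$ is finite, and pigeonhole yields ${x^{m-n}=1}$. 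One tiny remark: you could shorten the argument slightly by invoking the Northcott-type finiteness directly, but the elementary-symmetric-function bound you use is exactly the standard self-contained route and needs no further input. Nothing is missing.
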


\subsection{Heights}
\label{sshe}

Let ${x \in \bar\Q}$ be an algebraic number. Fix a number field~$K$ containing~$x$, and let ${d:=[K:\Q]}$ be its degree. We define the \textit{absolute logarithmic height} (the \textit{height} in the sequel) of~$x$  by
$$
\height(x):= d^{-1}\sum_{v\in M_K}d_v \log\max\{|x|_v, 1\}. 
$$
It is known (and not hard to see) that the right-hand side depends only on~$x$, and not on the particular choice of the field~$K$. 

\begin{remark}
There is an equivalent definition of the height, as the logarithmic Mahler measure of the minimal polynomial, divided by the degree. Equivalence of the two definitions is an easy consequence of the Gauss Lemma; see, for instance, ``Remark''  on the bottom of page~53 of~\cite{La83}. We mention this equivalent definition only because it used in some of the articles quoted in this note, like \cite{Vo96,Yu98}. 
\end{remark}

The height has the following properties (the proofs are easy and can be found in~\cite{BG06,HS00,La83}.). 

\begin{proposition}
\label{prheights}
\begin{enumerate}
\item
\label{iconj}
If ${x,y\in \bar\Q}$ are conjugate over~$\Q$ then ${\height(x)=\height(y)}$. 

\item
\label{isumpro}
For ${x_1, \ldots, x_n\in  \bar\Q}$ we have 
\begin{align*}
\height(x_1+\cdots+x_n) &\le \height(x_1)+\cdots+\height(x_n) +\log n, \\
\height(x_1\cdots x_n) &\le \height(x_1)+\cdots+\height(x_n). 
\end{align*}

\item
\label{ipower}
For ${x\in \bar\Q}$ and ${n\in \Z}$ (with ${x\ne 0}$ if ${n<0}$) we have ${\height(x^n) =|n|\height(x)}$.

\item (Kronecker Theorem) 
\label{ikr}
We have ${\height(x)=0}$ if and only if~$x$ is~$0$ or a root of unity. Moreover, for every positive integer~$d$ there exists ${\kappa(d)>0}$  such that for~$x$ of degree~$d$, non-zero and not a root of unity, we have ${\height(x)\ge \kappa(d)}$ (heights are ``separated'' from~$0$). 

\item
(Northcott Theorem)
For every ${C>0}$ there exist at most finitely many algebraic numbers of height and degree bounded by~$C$. 

\end{enumerate}
\end{proposition}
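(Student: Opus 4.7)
The plan is to derive each of the five items directly from the definition of the height, combined with the Product Formula~\eqref{eprofor} and a handful of elementary inequalities on absolute values.

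For the conjugate-invariance item, choose a Galois number field~$K$ containing both~$x$ and~$y$ and a ${\sigma \in \mathrm{Gal}(K/\Q)}$ with ${\sigma(x) = y}$; then~$\sigma$ permutes~$M_K$ via ${v \mapsto v\circ\sigma^{-1}}$ while preserving local degrees, so it identifies the sum defining~$\height(x)$ with that defining~$\height(y)$. For the sum/product item, the product bound follows at once from ${|xy|_v \le |x|_v|y|_v}$; for the sum bound, apply ${|x_1+\cdots+x_n|_v \le n\max_i |x_i|_v}$ at infinite~$v$ and the ultrametric inequality at finite~$v$, so that the correction $\log n$ arises only from the infinite places and uses the identity ${\sum_{v\mid \infty} d_v = d}$. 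For the power item with ${n>0}$, simply factor out~$n$ from ${\log\max(|x|_v^n, 1) = n\log\max(|x|_v, 1)}$; the case ${n=-1}$ reduces to ${\height(x^{-1}) = \height(x)}$ via the elementary relation ${\log\max(t^{-1},1) - \log\max(t,1) = -\log t}$ combined with the Product Formula.

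For the Kronecker item, if ${\height(x) = 0}$ then every non-negative term in the defining sum vanishes, so ${|x|_v \le 1}$ for all ${v \in M_K}$, and the Kronecker Theorem stated above forces~$x$ to be a root of unity. The converse is standard: roots of unity have absolute value~$1$ at every place. The quantitative lower bound $\kappa(d)$ then follows from the Northcott item: the set of algebraic numbers of degree at most~$d$ and height at most~$1$ is finite, so stripping off the (finitely many) roots of unity leaves a finite set of strictly positive heights whose minimum is $\kappa(d)$.

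The Northcott item is the only one requiring substantive work. The plan is to invoke the equivalent definition noted in the Remark: $d\cdot\height(x)$ equals the logarithmic Mahler measure of the minimal polynomial ${P(T)\in\Z[T]}$ of~$x$. The coefficients of~$P$ are then bounded in absolute value by a combinatorial factor times its Mahler measure, hence by an explicit function of~$C$ and~$d$; since they are rational integers, there are finitely many such~$P$, and their finitely many roots yield the conclusion. The main obstacle is verifying the Mahler-measure identity, which reduces to showing that the leading coefficient~$a$ of~$P$ satisfies ${\log|a| = \sum_{v \text{ finite}} d_v \log\max(|x|_v, 1)}$; this follows by applying Gauss's Lemma to~$P$ over each~$\Q_p$ (to compute the $p$-adic Gauss norm of $\prod_i(T-x_i)$ in terms of the conjugate valuations) and then summing over~$p$.
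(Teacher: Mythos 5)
The paper does not actually prove Proposition~\ref{prheights}; it explicitly defers to the standard references \cite{BG06,HS00,La83}, so there is no in-text argument to compare against. Your outline is a correct reconstruction of the standard proofs, and each item is handled by the usual route: Galois-permutation of places for item~\ref{iconj}; place-by-place estimates with the ultrametric inequality at finite places and a ${\sum_{v\mid\infty}d_v=d}$ count for item~\ref{isumpro}; the identity ${\max(t^n,1)=\max(t,1)^n}$ for ${n>0}$ plus the Product Formula for ${n=-1}$ in item~\ref{ipower}; the qualitative Kronecker statement plus Northcott for item~\ref{ikr}; and the Mahler-measure identity with Gauss's Lemma for item~\ref{ikr}'s quantitative part and item~5.

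Two small points worth making explicit if you were to write this out in full. First, your derivation of $\kappa(d)$ quietly assumes the finite set of non-root-of-unity algebraic numbers of degree~$d$ and height at most~$1$ is nonempty; you should set ${\kappa(d):=\min\bigl(\{1\}\cup\{\height(x)\}\bigr)}$ to cover the empty case. Second, your Northcott argument via Mahler measure gives existence of $\kappa(d)$ but not the explicit value~\eqref{evol} that the paper later uses inside Lemma~\ref{lchoose}; that explicit form requires the separate input of Voutier~\cite{Vo96}, not the Northcott-finiteness argument. Neither affects the correctness of your proof of the proposition as stated. The Gauss-norm computation you sketch for the leading-coefficient identity ${\log|a|=\sum_{v\nmid\infty}d_v\log\max\{|x|_v,1\}}$ is exactly right: multiplicativity of the $p$-adic Gauss norm applied to ${P(T)=a\prod_i(T-x_i)}$ over each~$\Q_p$, together with primitivity of~$P$, and then the Product Formula over~$\Q$ applied to~$a$.
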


\begin{remark}
\label{reheights}
\begin{enumerate}
\item
An important special case of Item~\ref{ipower} is 
\begin{equation}
\label{exmone}
\height(x^{-1})=\height(x) \qquad (x\in \bar\Q^\times). 
\end{equation}
In fact,~\eqref{exmone} follows immediately from the  Product Formula~\eqref{eprofor}. 

\item
Item~\ref{ipower} can be generalized as follows. Let ${r\in \Q}$ and ${x\in \bar\Q}$ (with ${x\ne 0}$ if ${r<0}$). Then ${\height(x^r)=|r|\height(x)}$, for any definition of the rational power $x^r$. In particular, 
\begin{equation}
\label{esqrt}
\height(\sqrt x)=\frac12\height(x). 
\end{equation}

\item
Let~$z$ be a complex algebraic number. Then Items~\ref{iconj} and~\ref{isumpro}, together with~\eqref{esqrt},  imply that 
$$
\height(\bar z)=\height(z), \qquad \height(|z|) \le \height(z), \qquad \height(\re z), \height(\im z) \le 2\height(z) +2\log2. 
$$

\item
Various explicit expressions for $\kappa(d)$ from Item~\ref{ikr} can be found in literature. For instance, Voutier \cite[Corollary~2]{Vo96}, elaborating on ideas of Dobrovolski~\cite{Do79}, proved that, for ${d\ge 2}$, 
\begin{equation}
\label{evo}
\kappa(d):=\frac{2}{d(\log(3d))^3}
\end{equation}
would do. Also, we have clearly  ${\kappa(1)=\log2}$. 
Slightly adjusting the right-hand side of~\eqref{evo}, we see that
\begin{equation}
\label{evol}
\kappa(d):= \frac{1}{2d(\log(3d))^3}
\end{equation}
would do for all~$d$. 

Conjecturally, $\kappa(d)$ can be taken as $c/d$, with some positive absolute constant~$c$ (this is known as Lehmer's conjecture). 

\end{enumerate}
\end{remark}

Here is an immediate consequence.

\begin{corollary}
\label{contoh}
Let~$x$ be a non-zero algebraic number of degree~$d$, not a root of unity, and let ${n\in \Z}$. Then, denoting ${y:=x^n}$, we have 
$$
|n|=\frac{\height(y)}{\height(x) }\le \frac{\height(y)}{\kappa(d)}\le 2d(\log(3d))^3\height(y). 
$$
\end{corollary}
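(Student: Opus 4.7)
The plan is to chain together three facts from Proposition~\ref{prheights} and Remark~\ref{reheights}, each supplying exactly one of the equalities/inequalities in the claim. The statement is really a bookkeeping corollary rather than a substantive result, so the sketch is short.

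First, I would use Item~\ref{ipower} of Proposition~\ref{prheights} to get ${\height(y)=\height(x^n)=|n|\height(x)}$. Since~$x$ is not a root of unity (and is non-zero), Item~\ref{ikr} of the same proposition says ${\height(x)>0}$, so I can divide and obtain the equality
$$
|n|=\frac{\height(y)}{\height(x)}.
$$

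Next, for the first inequality I would invoke the quantitative part of the Kronecker Theorem (Item~\ref{ikr}): for an algebraic number of degree~$d$ that is non-zero and not a root of unity, ${\height(x)\ge \kappa(d)}$. Replacing $\height(x)$ by its lower bound in the denominator yields ${|n|\le \height(y)/\kappa(d)}$.

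Finally, for the second inequality I would substitute the explicit value of $\kappa(d)$ from~\eqref{evol} in Remark~\ref{reheights}, namely ${\kappa(d)=1/(2d(\log(3d))^3)}$, which is valid for all ${d\ge 1}$. This gives ${1/\kappa(d)=2d(\log(3d))^3}$ and the stated bound follows. There is no real obstacle; the only subtlety is remembering that the uniform formula~\eqref{evol} (rather than Voutier's~\eqref{evo}, which requires ${d\ge 2}$) has been chosen precisely so that the same expression covers the case ${d=1}$.
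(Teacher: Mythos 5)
Your proposal is correct and matches the paper's (implicit) argument exactly: the paper presents the corollary as an immediate consequence of Item~\ref{ipower} and Item~\ref{ikr} of Proposition~\ref{prheights} together with the uniform bound~\eqref{evol}, which is precisely the chain you describe. Nothing is missing.
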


The following property is usually called   \textit{Liouville's inequality}. 

\begin{proposition}
\label{prliou}
Let~$K$ be a number field of degree~$d$, and ${x\in K^\times}$. Then for every ${v\in M_K}$ we have 
\begin{equation}
\label{eliou}
e^{-d\height(x)} \le |x|_v^{d_v}\le e^{d\height(x)}. 
\end{equation}
\end{proposition}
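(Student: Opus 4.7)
The plan is to derive both inequalities directly from the definition of the height together with the Product Formula~\eqref{eprofor}. The key observation is that the sum defining $\height(x)$ has each term $d_w\log\max\{|x|_w,1\}$ non-negative, so a single factor $\max\{|x|_v,1\}^{d_v}$ is always bounded above by the full product $\prod_{w\in M_K}\max\{|x|_w,1\}^{d_w} = e^{d\height(x)}$.

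For the upper bound in~\eqref{eliou}, I would simply note that
$$
|x|_v^{d_v}\le \max\{|x|_v,1\}^{d_v}\le \prod_{w\in M_K}\max\{|x|_w,1\}^{d_w}=e^{d\height(x)},
$$
since every factor in the product is at least~$1$.

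For the lower bound, I would apply the same reasoning to $x^{-1}$, using the Product Formula to rewrite the reciprocal absolute value. Concretely, from~\eqref{eprofor} we get
$$
|x|_v^{-d_v}=\prod_{w\ne v}|x|_w^{d_w}\le \prod_{w\ne v}\max\{|x|_w,1\}^{d_w}\le \prod_{w\in M_K}\max\{|x|_w,1\}^{d_w}=e^{d\height(x)},
$$
where I have used that $|x|_w^{d_w}\le \max\{|x|_w,1\}^{d_w}$ for each $w$ and that the omitted factor at $v$ is $\ge 1$. Taking reciprocals gives $|x|_v^{d_v}\ge e^{-d\height(x)}$, which is the remaining half of~\eqref{eliou}.

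There is really no substantive obstacle here: the whole proof is a two-line bookkeeping argument once one has both the definition of the height and the Product Formula in hand. The only mild subtlety is remembering to use the non-negativity of $\log\max\{|x|_w,1\}$ to drop unwanted factors in the right direction, and to invoke~\eqref{eprofor} (rather than~\eqref{exmone}, which would give a weaker statement about $\height(x^{-1})$ and not about the individual local factor $|x|_v^{d_v}$).
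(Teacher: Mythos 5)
Your proof is correct and uses the same tools as the paper (the definition of the height plus the Product Formula), with the upper bound identical and the lower bound differing only cosmetically: the paper applies the upper bound to $x^{-1}$ and then uses $\height(x^{-1})=\height(x)$, while you unfold that same step by invoking the Product Formula directly to write $|x|_v^{-d_v}=\prod_{w\ne v}|x|_w^{d_w}$. Your closing parenthetical is off, though: invoking~\eqref{exmone} is not weaker --- the chain $|x|_v^{-d_v}=|x^{-1}|_v^{d_v}\le e^{d\height(x^{-1})}=e^{d\height(x)}$ gives exactly the desired bound on the individual local factor $|x|_v^{d_v}$, and this is precisely the route the paper takes.
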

Here, the inequality on the right follows immediately from the definition of the height, and that on the left follows by applying the inequality on the right to $x^{-1}$ instead of~$x$.

\subsection{Baker's inequality}
\label{sslog}

Let~$K$ be a number field  of degree~$d$ and ${v\in M_K}$ an absolute value on~$K$.  We need a lower bound for the expression of the form ${|\alpha\lambda^n-1|_v}$, where ${\alpha, \lambda \in K^\times}$ and ${n\in \Z_{\ge0}}$, of course, provided that ${\alpha\lambda^n\ne 1}$. Using Proposition~\ref{prheights}, we estimate 
$$
\height(\alpha\lambda^n-1)\le \height(\alpha)+n\height(\lambda)+\log2, 
$$
which, by Proposition~\ref{prliou}, implies that 
\begin{equation}
\label{eliouour}
|\alpha\lambda^n-1|_v\ge e^{-d(\height(\alpha)+n\height(\lambda)+\log2)}.
\end{equation}
This bound is, however, insufficient for our purposes: we need a lower bound of the form ${|\alpha\lambda^n-1|_v\ge e^{-o(n)}}$ as ${n\to \infty}$.
We are going to prove the following.

\begin{proposition}
\label{prbak}
Let~$K$ be a number field  of degree~$d$ and ${\lambda \in K^\times}$. Let  ${v\in M_K}$ an absolute value on~$K$ with underlying prime ${p\in M_\Q}$. Then there exists ${C>0}$, effectively depending on~$d$,~$p$ and~$\lambda$, such that for any ${\alpha \in K^\times}$  and ${n\in \Z}$ with ${n\ge 2}$,  we have either ${\alpha\lambda^n=1}$ or 
\begin{equation}
\label{ebak}
|\alpha\lambda^n-1|_v \ge e^{-C(\height(\alpha)+1)\log n}. 
\end{equation}
\end{proposition}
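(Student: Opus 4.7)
The naive Liouville bound \eqref{eliouour} degrades like $e^{-O(n)}$, whereas the proposition demands $e^{-O(\log n)}$. Closing this exponential gap is precisely the business of Baker's theorem on linear forms in logarithms (and of Yu's $p$-adic analog at a finite place), so my plan is to deduce \eqref{ebak} directly from the standard multiplicative formulation of one of these theorems, used as a black box.

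First I would dispose of the uninteresting range where $|\alpha\lambda^n-1|_v\ge 1/2$, in which \eqref{ebak} is trivial provided $C$ is chosen large enough. In the remaining range, I would invoke the following version of ``multiplicative Baker'' for two algebraic numbers $\alpha_1,\alpha_2\in K^\times$ and integer exponents $b_1,b_2$: provided $\alpha_1^{b_1}\alpha_2^{b_2}\ne 1$, one has
$$|\alpha_1^{b_1}\alpha_2^{b_2}-1|_v\ge \exp\!\bigl(-c\,\max\{\height(\alpha_1),1\}\,\max\{\height(\alpha_2),1\}\,\log\max\{|b_1|,|b_2|,3\}\bigr),$$
where $c$ is effective and depends only on $d=[K:\Q]$ and, in the finite case, on the underlying prime $p$. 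For infinite $v$ this follows from the theorem of Baker--W\"ustholz (or, in this two-variable case, from the sharper estimates of Laurent--Mignotte--Nesterenko); for finite $v$ it is Yu's theorem \cite{Yu98}, which is why the proposition cites that reference.

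Applying this statement with $\alpha_1=\alpha$, $\alpha_2=\lambda$, $b_1=1$, $b_2=n\ge 2$ yields
$$|\alpha\lambda^n-1|_v\ge \exp\!\bigl(-c\,\max\{\height(\alpha),1\}\,\max\{\height(\lambda),1\}\,\log n\bigr).$$
Since $\max\{\height(\lambda),1\}$ is a fixed positive constant depending only on $\lambda$, absorbing it (together with $c$) into a single constant $C$ gives $|\alpha\lambda^n-1|_v\ge \exp(-C(\height(\alpha)+1)\log n)$, which is \eqref{ebak}.

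All the actual mathematical content of the proposition sits inside the quoted Baker--W\"ustholz and Yu theorems; those proofs are long and technical and I would not reproduce them in this expository note. The only genuine check on our side is bookkeeping: one must verify that the integer bound $B$ relevant to the application is $\max\{1,n,3\}=n$, so that the $\log B$ factor turns into $\log n$ and no spurious $\log\height(\alpha)$ term appears. That is the step I would be most careful with, since an over-enthusiastic reduction (say, taking logarithms in the archimedean case and picking up a branch $-2\pi i k$ with $|k|\lesssim n+\height(\alpha)$) could introduce a $\log(n+\height(\alpha))$ where only $\log n$ is wanted; sticking to the multiplicative formulation of Baker--Yu avoids this pitfall entirely.
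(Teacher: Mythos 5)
Your archimedean argument is fine and is essentially the paper's: for $v\mid\infty$ the paper also treats Baker's inequality as a black box (Matveev's Theorem~\ref{thmat}), the only bookkeeping being the passage from the multiplicative to the logarithmic form via Lemma~\ref{ldweg} and a choice of the branch integer $m$. Incidentally, the pitfall you fear there does not exist: since principal arguments lie in $(-\pi,\pi]$ \emph{independently of the heights}, the branch integer satisfies $|2m|\le n+2$, so the $\log B$ factor is $\log(n+2)$ and no $\log(n+\height(\alpha))$ can appear; a two-term ``multiplicative'' statement (Baker--W\"ustholz, Laurent--Mignotte--Nesterenko) is obtained from the logarithmic one by exactly this observation.

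The genuine gap is at the finite places. The black box you invoke there is attributed to Yu, but Yu's theorem (Theorem~\ref{thyu}, and likewise the standard $p$-adic linear-forms results) carries the hypothesis $\nu_\gerp(\theta_k)=0$ for all $k$: the $p$-adic statement simply does not apply to arbitrary $\alpha,\lambda\in K^\times$, and your quoted ``multiplicative Baker at a finite place'' without any unit condition is stronger than the theorem you cite. Bridging this hypothesis is precisely the nontrivial part of the paper's finite-place proof: one first notes that in the nontrivial range $|\alpha\lambda^n-1|_v<1$ the ultrametric inequality forces $|\alpha\lambda^n|_v=1$; then either $|\lambda|_v=1$, in which case also $|\alpha|_v=1$, i.e.\ $\nu_\gerp(\alpha)=\nu_\gerp(\lambda)=0$ and Yu's theorem does apply, or $|\lambda|_v\ne1$, in which case $n=-\log|\alpha|_v/\log|\lambda|_v$ is pinned down and Liouville's inequality (Proposition~\ref{prliou}) gives $n\le \frac{d}{\log 2}\height(\alpha)$, after which the naive bound~\eqref{eliouour} already yields~\eqref{ebak} --- no linear-forms estimate is needed at all in that case. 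Your proposal skips this dichotomy entirely, so as written the finite-place case rests on an unverified hypothesis of the cited theorem; the fix is short, but it is exactly the missing step, and it is where the constant also acquires its dependence on $p$ (or, in the context of Proposition~\ref{prunits}, gets converted into a dependence on $\lambda$).
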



This proposition is a special case of a deep result known as \textit{Baker's  inequality}, which provides a non-trivial lower bound for expressions of the form ${|\theta_1^{b_1}\cdots \theta_s^{b_s}-1|_v}$, where $\theta_j$ are non-zero algebraic numbers and $b_j$ are rational integers such that ${\theta_1^{b_1}\cdots \theta_s^{b_s}\ne 1}$.  Various forms of this inequality are available in the literature.  We will use the results of Matveev~\cite{Ma00}\footnote{The earlier work of Baker and Wüstholz~\cite{BW93} would suffice as well.} for  infinite~$v$ and of Yu~\cite{Yu98} when~$v$ is finite. 


The following is Corollary~2.3 of Matveev~\cite{Ma00}. 

\begin{theorem}[Matveev]
\label{thmat}
Let ${\theta_1, \ldots, \theta_s}$ be non-zero complex algebraic numbers belonging to a number field of degree~$d$, and  ${\log\theta_1, \ldots , \log\theta_s}$ some determinations of their complex logarithms.   Let ${b_1, \ldots, b_s\in \Z}$ be such that 
$$
\Lambda:=b_1\log\theta_1+\cdots+b_s\log\theta_s\ne 0. 
$$
 Let ${A_1, \ldots, A_s, B}$ be real numbers satisfying 
\begin{align*}
A_k&\ge \max\{d\height(\theta_k),|\log\theta_k|,0.16\}\qquad (k=1,\ldots, s),\\
B&\ge \max\{|b_1|, \ldots, |b_s|\}. 
\end{align*}
Then 
\begin{equation*}
|\Lambda|\ge e^{-2^{6s+20} d^2(1+\log d)A_1\cdots A_s (1+\log B)}. 
\end{equation*}
\end{theorem}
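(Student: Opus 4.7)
The plan is to follow the Gel'fond--Baker method of auxiliary functions, in the sharpened form developed by Matveev in \cite{Ma00}. Suppose, for contradiction, that $|\Lambda|$ is smaller than the bound claimed. The strategy is to build an auxiliary expression in the $\theta_i$ that is simultaneously (i) analytically very small, thanks to the hypothesis $|\Lambda|\ll 1$, and (ii) bounded from below as a nonzero algebraic number, by Liouville's inequality (Proposition~\ref{prliou}). Comparing the two bounds yields the contradiction.

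First I would carry out preliminary reductions. After factoring out common torsion, one assumes that $\theta_1,\ldots,\theta_s$ are multiplicatively independent (otherwise one reduces $s$ and inducts); after permuting indices one takes $B=|b_s|$, and solves for $\theta_s^{b_s}$ from $\Lambda$ as $\theta_s^{b_s}=\theta_1^{-b_1}\cdots\theta_{s-1}^{-b_{s-1}}e^{\Lambda}$, translating the smallness of $\Lambda$ into a multiplicative approximation. Next, for a well-chosen prime parameter $\ell$, one passes to the Kummer extension $K(\theta_1^{1/\ell},\ldots,\theta_{s-1}^{1/\ell})$; multiplicative independence together with Kummer theory guarantees that this extension has full degree $\ell^{s-1}$ over $K$, so that no unexpected algebraic relation collapses the construction. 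This Kummer-descent step is the novelty of Matveev's proof, and it is precisely what produces the sharp factor $d^{2}(1+\log d)$ in the exponent in place of the earlier $d^{s+2}$ of Baker--W\"ustholz.

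With the geometric setup in place, I would apply Siegel's lemma over the Kummer extension to produce, for parameters $L_0,\ldots,L_s$ and $T$ to be optimized later, a nonzero integer-coefficient exponential polynomial
$$
F(z) \;=\; \sum_{\tau_0=0}^{L_0}\cdots\sum_{\tau_s=0}^{L_s} p(\tau_0,\ldots,\tau_s)\, z^{\tau_0}\, \theta_1^{\tau_1 z/\ell}\cdots \theta_s^{\tau_s z/\ell}
$$
all of whose derivatives of order $<T$ vanish at $z=0,1,\ldots,L$, with the coefficients $p(\tau_0,\ldots,\tau_s)$ controlled in height. An analytic extrapolation then uses the hypothesis on $|\Lambda|$, together with Schwarz-lemma / maximum-modulus estimates, to propagate the vanishing of $F$ to a strictly larger range of integer points with the same multiplicity. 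Iterating this extrapolation and invoking a zero estimate of Philippon--Nesterenko--W\"ustholz type on the multiplicity of vanishing of exponential polynomials in few variables, one eventually forces $F$ to vanish so often that it must be identically zero, contradicting the nondegeneracy of the Siegel coefficient system.

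The hard part will be the precise calibration of the parameters $L_0,\ldots,L_s$, $T$ and $\ell$ needed to extract the explicit constant $2^{6s+20}$: this combines Fel'dman-style determinantal bounds with the Kummer-descent bookkeeping, and absorbs the bulk of \cite{Ma00}. Since the argument is deep and entirely orthogonal to the combinatorial content of this exposition, we make no attempt to reproduce it here, and treat Theorem~\ref{thmat} throughout as a black box.
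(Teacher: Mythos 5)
The paper offers no proof of Theorem~\ref{thmat} at all: it is imported verbatim as Corollary~2.3 of Matveev~\cite{Ma00}, and your proposal, after a broadly accurate but non-binding sketch of Matveev's method (Kummer descent, Siegel-lemma construction of an auxiliary function, extrapolation, zero estimate), likewise ends by treating the theorem as a black box --- so in substance you take exactly the same route as the paper. That is appropriate here; just note that your sketch is not itself a proof (all quantitative content, including the constant $2^{6s+20}d^2(1+\log d)$, is deferred to~\cite{Ma00}), and the specific attribution of the improvement from $d^{s+2}$ to $d^2(1+\log d)$ solely to the Kummer-descent step is an oversimplification of Matveev's argument.
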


The case of infinite~$v$ of Proposition~\ref{prbak} is an easy consequence of this theorem. In what follows $\arg$ and $\log$ stand for the principal branches of the complex argument and logarithm; that is, for ${z\in \C^\times}$ we have 
\begin{equation}
\label{elogconv}
-\pi <\arg z = \im\log z \le \pi.
\end{equation}
We will use the following simple lemma.

\begin{lemma}
\label{ldweg}
Let ${0<\rho <1}$ and let ${z\in \C}$ satisfy ${|z-1|\le \rho}$. Then 
$$
|\log z|\le -\frac{\log(1-\rho)}\rho|z-1|
$$
In particular, if ${|z-1|\le 1/2}$ then ${|\log z|\le 1.39|z-1|}$. 
\end{lemma}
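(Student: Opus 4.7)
My plan is to use the Taylor expansion of the principal branch of the logarithm around $1$. Set $w:=z-1$, so $|w|\le\rho<1$. In this disk the power series
$$
\log(1+w)=\sum_{k\ge 1}\frac{(-1)^{k-1}}{k}w^k
$$
converges to the principal branch $\log z$ (which is well-defined, because $|w|<1$ keeps $z$ in the right half-plane, away from the standard cut on $(-\infty,0]$).

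Next I would apply the triangle inequality and factor out $|w|$:
$$
|\log z|\le \sum_{k\ge 1}\frac{|w|^k}{k}=|w|\sum_{k\ge 1}\frac{|w|^{k-1}}{k}.
$$
Then I would use the monotonicity $|w|^{k-1}\le\rho^{k-1}$ (valid for every $k\ge1$ since $|w|\le\rho$) to bound the remaining series by a series depending only on $\rho$:
$$
\sum_{k\ge 1}\frac{|w|^{k-1}}{k}\le \sum_{k\ge 1}\frac{\rho^{k-1}}{k}=\frac{1}{\rho}\sum_{k\ge 1}\frac{\rho^k}{k}=-\frac{\log(1-\rho)}{\rho}.
$$
Combining the two inequalities yields the claimed bound $|\log z|\le -\dfrac{\log(1-\rho)}{\rho}|z-1|$.

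For the second assertion I would specialize to $\rho=1/2$, where the constant becomes $-\log(1/2)/(1/2)=2\log 2=1.3862\ldots<1.39$. There is no genuine obstacle here; the only minor point to verify is that the Taylor series actually represents the principal branch (not a different sheet), which is immediate because $|z-1|\le 1/2<1$ forces $\operatorname{Re} z>0$ and the series agrees with the principal branch on a neighbourhood of $1$, hence throughout the connected open disk of convergence.
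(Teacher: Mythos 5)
Your proof is correct and follows essentially the same route as the paper's: both expand the principal logarithm in its Taylor series at $1$, bound term by term by the triangle inequality, and sum the resulting geometric-type series to $-\log(1-\rho)/\rho$. The only cosmetic difference is that the paper bounds $|\log z/(z-1)|$ directly while you factor out $|w|$ after the triangle inequality; these are the same computation after reindexing.
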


\begin{proof}
We have 
$$
 \left|\frac{\log z}{z-1}\right|= \left|\sum_{k=0}^\infty (-1)^k\frac{(z-1)^k}{k+1}\right| \le \sum_{k=0}^\infty \frac{\rho^k}{k+1}= -\frac{\log(1-\rho)}\rho, 
$$
as wanted. 
\end{proof}

\begin{proof}[Proof of Proposition~\ref{prbak} for infinite~$v$]

Since the absolute value~$v$ is infinite, there is an embedding ${K\stackrel\sigma\hookrightarrow \C}$  such that ${|x|_v=|\sigma(x)|}$ for ${x\in K}$. We will identify~$K$ with its image $\sigma(K)$; in particular, $\alpha, \lambda$ are now complex algebraic numbers. Assuming that 
${\alpha\lambda^n\ne 1}$, we have to prove the lower bound 
\begin{equation}
\label{eloboc}
|\alpha\lambda^n-1| \ge e^{-C_1(\height(\alpha)+1)\log n},
\end{equation}
where in this proof $C_1, C_2$, etc. are positive quantities depending (effectively) only on~$d$ and~$\lambda$.  

Let~$m$ be the nearest integer to ${(\arg \alpha + n\arg \lambda)/(2\pi)}$, where we take the smaller one if there are two nearest integers. This means that   ${m\in \Z}$ and 
\begin{equation}
\label{ebetween}
-\pi < |\arg \alpha + n\arg \lambda -2\pi m|\le \pi. 
\end{equation}
Note that ${|2m|\le n+2}$. 

Defining 
$$
\Lambda:= \log \alpha +n\log \lambda -2m\pi i, 
$$
inequality~\eqref{ebetween} implies that, with our convention~\eqref{elogconv}, we have
${\log (\alpha\lambda^n)= \Lambda}$. 
In particular, since ${\alpha\lambda^n\ne 1}$, we have ${\Lambda \ne 0}$.  

We may  assume that ${|\alpha\lambda^n- 1|\le 1/2}$, since otherwise ~\eqref{eloboc} holds trivially. Lem\-ma~\ref{ldweg} implies that ${|\alpha\lambda^n-1|\ge 1.39^{-1}|\Lambda|}$, and it suffices to prove that 
\begin{equation}
\label{elobocla}
|\Lambda| \ge e^{-C_2(\height(\alpha)+1)\log n}. 
\end{equation}
We apply Theorem~\ref{thmat} with the following data: ${s:=3}$, 
\begin{align*}
\theta_1&:= \alpha, & \theta_2&:=\lambda, &\theta_3&:=-1, \\
A_1&: =d\height(\alpha)+\pi, & A_2&:=d\height(\lambda)+\pi, & A_3&:=\pi, \\
b_1&:= 1, & b_2&:=n, &b_3& := -2m, &
B&:=n+2.  
\end{align*}
For a complex algebraic number ${z\in K}$  we have 
${|\log z|\le \log|z|+\pi \le d\height(z)+\pi}$
by Proposition~\ref{prliou}, which implies that our  $A_1,A_2,A_3$ satisfy the hypothesis of Theorem~\ref{thmat}. 

We have 
${A_1A_2A_3 \le C_3(\height(\alpha)+1)}$. Hence~\eqref{elobocla} follows from 
Theorem~\ref{thmat}. 
This proves Proposition~\ref{prbak} for infinite~$v$. 
\end{proof}

To prove Proposition~\ref{prbak} for finite~$v$ we use a result of Yu, see the displayed equation on  top of page~30 of~\cite{Yu98}. 

\begin{theorem}[Yu]
\label{thyu}
Let~$K$ be a number field of degree~$d$ and~$\gerp$ be a prime of~$K$ with underlying rational prime~$p$.  
Let ${\theta_1, \ldots, \theta_s\in K^\times}$ satisfy 
$$
\nu_\gerp(\theta_k) =0 \qquad (k=1, \ldots, s), 
$$ 
and let ${b_1, \ldots, b_s\in \Z}$ be such that ${\Xi:=\theta_1^{b_1}\cdots \theta_s^{b_s}-1\ne 0}$. 
Let ${h_1, \ldots, h_s, B}$ be real numbers satisfying 
\begin{align*}
h_k&\ge \max\{\height(\theta_k),\log p\}\qquad (k=1,\ldots, s),\\
B&\ge \max\{|b_1|, \ldots, |b_s|,3\}. 
\end{align*}
Then 
\begin{equation*}
\nu_\gerp(\Xi) \le C_0h_1\cdots h_s \log B, \qquad
C_0:= 12\left( \frac{6(s+1)d}{(\log p)^{1/2}}\right)^{2(s+1)}p^d\bigl(5+\log(sd)\bigr).
\end{equation*}
\end{theorem}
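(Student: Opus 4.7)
The plan is to adapt the Gelfond--Baker transcendence method to the $\gerp$-adic setting, as carried out by Yu in \cite{Yu98}. The natural strategy is to convert the multiplicative quantity $\Xi$ into a linear form in $\gerp$-adic logarithms and bound its $\gerp$-adic valuation. Let $\log_\gerp$ denote the $\gerp$-adic logarithm, which converges on the open disk $\{z \in K_\gerp : |z-1|_v < 1\}$. Since $\nu_\gerp(\theta_k)=0$ only guarantees that $\theta_k$ is a $\gerp$-adic unit, one first raises each $\theta_k$ to a power $q$ which is a suitable multiple of $p^f-1$ (where $f$ is the inertia degree of $\gerp$), so that $\theta_k^q$ lies in the disk of convergence. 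This substitution transforms $\Xi$ into a related quantity whose $\gerp$-adic valuation differs from that of the linear form
$$
\Lambda := b_1 \log_\gerp\theta_1 + \cdots + b_s\log_\gerp\theta_s
$$
(after the power adjustment) by a controlled amount readable off the logarithm series; so it suffices to upper-bound $\nu_\gerp(\Lambda)$.

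The core of the proof is the construction of an auxiliary $\gerp$-adic analytic function. Using Siegel's lemma as a pigeonhole argument over $\OO_K$, one produces a non-zero polynomial $P(X_1,\ldots,X_s) \in \OO_K[X_1,\ldots,X_s]$ of controlled partial degrees $\le L$ and controlled height, such that the function
$$
F(z) := P(\theta_1^z, \ldots, \theta_s^z)
$$
(interpreted $\gerp$-adically on a disk around the origin) vanishes to prescribed order $T$ at each integer $z = 0, 1, \ldots, S-1$. The parameters $L$, $T$, $S$ are balanced so the linear system underlying Siegel's lemma has non-trivial solutions and the ensuing estimates are tight. The existence of $\Lambda$ with small $\nu_\gerp(\Lambda)$ would force $F$ to have additional small values beyond its construction.

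The second half is the extrapolation-and-descent: by a $\gerp$-adic maximum principle (or a Schnirelman-type integral), the high-order vanishings of $F$ on the initial range make $F$ small on a larger disk, and in particular force further approximate vanishings. Combined with Liouville's inequality (Proposition~\ref{prliou}) applied to the algebraic numbers $F(n)$, an inductive amplification yields $F \equiv 0$, contradicting its non-trivial construction---unless $\nu_\gerp(\Lambda)$ is bounded as claimed, which translates back into the stated bound on $\nu_\gerp(\Xi)$.

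The main obstacle is purely quantitative bookkeeping. The shape of $C_0$, namely $12(6(s+1)d/(\log p)^{1/2})^{2(s+1)} p^d(5+\log(sd))$, is not an artifact but is dictated by the proof: the exponent $2(s+1)$ tracks the number of Siegel-lemma variables in the auxiliary polynomial together with the descent factor, the factor $p^d$ records the cost of raising the $\theta_k$ into the domain of convergence of $\log_\gerp$ (hence the inertia/ramification data packaged as $d$), and the denominator $(\log p)^{1/2}$ reflects the $\gerp$-adic radius of convergence. Making every one of these appear with the sharp exponent, while maintaining consistency between archimedean heights $h_k$ of $\theta_k$ and their $\gerp$-adic analytic sizes, is what makes the actual proof in \cite{Yu98} long and delicate; in this expository note we take the bound as a black box.
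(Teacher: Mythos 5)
The paper does not prove this statement at all: it is imported verbatim from Yu (the displayed equation at the top of page~30 of~\cite{Yu98}), and your proposal, after its heuristic outline of the $\gerp$-adic Gelfond--Baker method, likewise declares the bound a black box, so in effect you take the same route as the paper. Just be aware that the sketch itself is not a proof and could not yield the explicit constant $C_0$ (the attributions of the factors $p^d$, $(\log p)^{1/2}$ and the exponent $2(s+1)$ are plausible glosses, not derivations, and Yu's actual argument proceeds via a zero estimate and Kummer-type descent rather than forcing $F\equiv 0$), so it should be presented as a citation, exactly as the paper does.
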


For our purposes the exact formula for~$C_0$ is not relevant; it is only important that~$C_0$ can be expressed explicitly in terms of~$d$,~$p$ and~$s$. 

Yu improved on his result in subsequent articles, but the improvements mainly concern the shape of the expression for~$C_0$. 

\begin{proof}[Proof of Proposition~\ref{prbak} for finite~$v$]
Let~$\gerp$ be the prime of~$K$ such that~\eqref{evp} holds; that is, for ${x\in K^\times}$ we have ${|x|_v =p^{-\nu_\gerp(x)/e}}$, where ${e=e_\gerp}$ is the ramification index.

We may assume that ${|\alpha\lambda^n-1|_v<1}$, since otherwise~\eqref{ebak} holds trivially. In particular, ${|\alpha\lambda^n|_v=1}$. 

If ${|\lambda|_v \ne 1}$ then ${n=-\log|\alpha|_v/\log|\lambda|_v}$.  Proposition~\ref{prliou} implies that 
$$
|\log|\alpha|_v| \le \frac{d}{d_v}\height(\alpha).
$$
Also, since $ {|\lambda|_v \ne 1}$, we have either ${|\lambda|_v \le p^{-1/e}}$ or ${|\lambda|_v \ge p^{1/e}}$; in other words, 
$$
|\log|\lambda|_v|\ge\frac1e \log p. 
$$
Hence
$$
n \le \frac d{\log p}\frac{e}{d_v}{\height(\alpha)}\le \frac{d}{\log2}\height(\alpha), 
$$
and the result follows from~\eqref{eliouour}. 

Now assume that ${|\lambda|_v=1}$. Then ${|\alpha|_v=1}$ as well. In other words, 
$$
\nu_\gerp(\alpha)=\nu_\gerp(\lambda)=0.
$$
Hence Theorem~\ref{thyu} applies and we obtain 
$$
\nu_\gerp(\alpha\lambda^n-1) \ge C_1(\height(\alpha)+1)\log n, 
$$
where~$C_1$ depends effectively on~$d$,~$p$ and~$\lambda$. Hence 
$$
|\alpha\lambda^n-1|_v =p^{-\nu_\gerp(\alpha\lambda^n-1)/e} \ge e^{-C(\height(\alpha)+1)\log n}, 
$$
where ${C=C_1\log p}$. The proof is complete. 

\end{proof}

\subsection{Two circles}
\label{sstri}

In this subsection we prove an elementary geometric estimate, which is  needed to apply ``Beukers' trick'' in the proof of Proposition~\ref{prbeuk}.

Let ${\rho_0,\rho_1}$ be  real numbers satisfying  ${0<\rho_0,\rho_1 \le 1}$,  and let $\CC_0,\CC_1$ be the circles on the complex plane centered in~$0$, respectively,~$1$, of radius~$\rho_0$, respectively,~$\rho_1$:
$$
\CC_1:=\{z\in \C: |z|=\rho_0\}, \qquad \CC_1:=\{z\in \C: |z-1|=\rho_1\}. 
$$
We have 
$$
\CC_0\cap\CC_1=
\begin{cases}
\varnothing,&\text{when ${\rho_0+\rho_1 <1}$},\\
\{\rho_0\}, & \text{when ${\rho_0+\rho_1 =1}$}, \\
\{\varsigma,\bar\varsigma\}, & \text{when ${\rho_0+\rho_1 >1}$}, 
\end{cases}
$$
where
\begin{equation}
\label{evars}
\varsigma:= \frac{1+\rho_0^2-\rho_1^2+i\sqrt{4\rho_0^2\rho_1^2-(1-\rho_0^2-\rho_1^2)^2}}{2}. 
\end{equation}
We choose the positive value of the square root, so that ${\im\varsigma >0}$.
See Figure~\ref{ficircles} for an illustration. 

\begin{figure}
\caption{two circles}
\label{ficircles}
\begin{center}
\begin{picture}(90,60)(-30,-30)
\put(-25,0){\vector(1,0){80}}
\put(-2,-6){\tiny{$0$}}
\put(23,-6){\tiny{$1$}}
\put(0,0){\circle*{3}}
\put(25,0){\circle*{3}}
\put(0,0){\circle{25}}
\put(25,0){\circle{15}}
\put(-0,-30){\tiny{${\rho_0+\rho_1 <1}$}}
\end{picture}
\begin{picture}(90,60)(-30,-30)
\put(-25,0){\vector(1,0){80}}
\put(-2,-6){\tiny{$0$}}
\put(23,-6){\tiny{$1$}}
\put(0,0){\circle*{3}}
\put(25,0){\circle*{3}}
\put(0,0){\circle{29}}
\put(25,0){\circle{19}}
\put(14.5,0){\circle*{3}}
\put(12,-11){\tiny{$\rho_0$}}
\put(0,-30){\tiny{${\rho_0+\rho_1 =1}$}}
\end{picture}
\begin{picture}(100,60)(-30,-30)
\put(-25,0){\vector(1,0){80}}
\put(-2,-6){\tiny{$0$}}
\put(23,-6){\tiny{$1$}}
\put(0,0){\circle*{3}}
\put(25,0){\circle*{3}}
\put(0,0){\circle{30}}
\put(25,0){\circle{40}}
\put(10,13){\circle*{3}}
\put(10,-13){\circle*{3}}
\put(8,16){\scriptsize{$\varsigma$}}
\put(8,-20){\scriptsize{$\bar\varsigma$}}
\put(0,-30){\tiny{${\rho_0+\rho_1 >1}$}}
\end{picture}
\end{center}
\end{figure}

\begin{remark}
Equation~\eqref{evars} looks a bit scary, but it is not used in the proof of  Proposition~\ref{pbeuk} below.  
\end{remark}

The following proposition is inspired by  Beukers. 


\begin{proposition}
\label{pbeuk}
Let ${z_0\in \CC_0}$ and ${z_1\in \CC_1}$.
Then
\begin{equation}
\label{ebeuk}
|z_0-z_1|\ge 
\begin{cases}
1-\rho_0-\rho_1 & \text{if $\rho_0+\rho_1<1$},\\ 
&\\
\displaystyle\frac{1}{2\rho_0}|z_0-\rho_0|^2  & \text{if $\rho_0+\rho_1=1$},\\ 
&\\
\displaystyle\frac23 \left(1+ \frac{2\rho_0}{\im\varsigma}\right)^{-1/2}\min \{|z_0-\varsigma|, |z_0-\bar\varsigma|\}  &\text{if $\rho_0+\rho_1>1$}. 
\end{cases}
\end{equation}
\end{proposition}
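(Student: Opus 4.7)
The plan is to treat the three cases separately. \textbf{Case 1} ($\rho_0+\rho_1<1$) is immediate from the triangle inequality applied to the collinear‐type estimate
\[
|z_0-z_1| \;\ge\; |0-1| - |z_0-0| - |z_1-1| \;=\; 1-\rho_0-\rho_1.
\]
In \textbf{Case 2} ($\rho_0+\rho_1=1$) I would start from the universal bound $|z_0-z_1|\ge \bigl||z_0-1|-\rho_1\bigr|$ (the distance from $z_0$ to $\mathcal{C}_1$) and rewrite it as $\bigl||z_0-1|^2-\rho_1^2\bigr|/(|z_0-1|+\rho_1)$. Using $|z_0|^2=\rho_0^2$ and $\rho_1=1-\rho_0$, the numerator simplifies to
\[
|z_0-1|^2-\rho_1^2 \;=\; 2(\rho_0-\re z_0) \;=\; \frac{|z_0-\rho_0|^2}{\rho_0},
\]
the last equality being $|z_0-\rho_0|^2 = 2\rho_0(\rho_0-\re z_0)$ for $z_0\in\mathcal{C}_0$. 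For the denominator, the tangency $\rho_0+\rho_1=1$ combined with $|z_0-1|\le\rho_0+1$ gives $|z_0-1|+\rho_1\le 2$, yielding the stated bound.

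For \textbf{Case 3} ($\rho_0+\rho_1>1$) I would apply the same strategy, with $\rho_0$ replaced by the intersection points. Expanding $q(z):=(z-\varsigma)(z-\bar\varsigma)=z^2-2\re\varsigma\cdot z+|\varsigma|^2$ and substituting $|\varsigma|^2=\rho_0^2=z_0\bar z_0$ for $z_0\in\mathcal{C}_0$ produces the clean identity $q(z_0)=2z_0(\re z_0-\re\varsigma)$, so
\[
|z_0-\varsigma|\,|z_0-\bar\varsigma| \;=\; 2\rho_0\,|\re z_0 - \re\varsigma|.
\]
On the other hand, the relation $\rho_0^2-\rho_1^2=2\re\varsigma-1$ (which follows at once from $|\varsigma|^2=\rho_0^2$ and $|\varsigma-1|^2=\rho_1^2$) yields $|z_0-1|^2-\rho_1^2=2(\re\varsigma-\re z_0)$. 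Combining the two,
\[
|z_0-z_1| \;\ge\; \bigl||z_0-1|-\rho_1\bigr| \;=\; \frac{|z_0-\varsigma|\,|z_0-\bar\varsigma|}{\rho_0\,(|z_0-1|+\rho_1)}.
\]

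To conclude, assume without loss of generality that $|z_0-\varsigma|\le|z_0-\bar\varsigma|$, so the $\min$ on the right‐hand side of \eqref{ebeuk} is $|z_0-\varsigma|$. It then suffices to bound the ratio $|z_0-\bar\varsigma|/(|z_0-1|+\rho_1)$ from below. The elementary inputs are $|z_0-\bar\varsigma|\ge\im\varsigma$ (from $|z_0-\varsigma|+|z_0-\bar\varsigma|\ge|\varsigma-\bar\varsigma|=2\im\varsigma$ and the WLOG) and $|z_0-1|\le|z_0-\bar\varsigma|+\rho_1$, which already yield a lower bound of the right shape. \emph{The main obstacle} is that these crude triangle‐inequality estimates are too loose to produce the precise constant $\tfrac{2}{3}(1+2\rho_0/\im\varsigma)^{-1/2}$: to tighten them one needs to exploit the sharper identity $|z_0-\bar\varsigma|^2-|z_0-\varsigma|^2=4\,\im\varsigma\cdot\im z_0$ (a direct consequence of $|z_0|^2=\rho_0^2$), which couples the two distances as $z_0$ moves along $\mathcal{C}_0$, together with a short case split according as $|z_0-\bar\varsigma|$ is comparable to $\im\varsigma$ or much larger, and the global size constraint $\rho_0,\rho_1\le 1$ on the denominator.
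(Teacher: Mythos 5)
Your Cases 1 and 2 are correct and coincide with the paper's argument: both start from the universal lower bound ${|z_0-z_1|\ge\bigl||z_0-1|-\rho_1\bigr|}$, compute ${|z_0-1|^2-\rho_1^2 = 2(\rho_0-\re z_0) = |z_0-\rho_0|^2/\rho_0}$ for ${z_0\in\CC_0}$, and use ${|z_0-1|+\rho_1\le 2}$.

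Case 3 is genuinely incomplete, as you yourself flag. Everything up to
\[
|z_0-z_1|\ \ge\ \frac{\bigl||z_0-1|^2-\rho_1^2\bigr|}{|z_0-1|+\rho_1}\ =\ \frac{2|\re z_0-\re\varsigma|}{|z_0-1|+\rho_1}\ \ge\ \frac{2}{3}\,|\re z_0-\re\varsigma|
\]
is exactly what the paper does (using ${|z_0-1|+\rho_1\le 3}$), and your factorisation identity ${|z_0-\varsigma|\,|z_0-\bar\varsigma| = 2\rho_0\,|\re z_0-\re\varsigma|}$ is a correct and pretty reformulation. The step you are missing is precisely the content of the paper's Lemma~\ref{llere}: for two points on the circle ${\CC_0}$ whose imaginary parts add up to at least $\eta$ in absolute value, one has ${|\re z-\re w|\ge (1+2\rho_0/\eta)^{-1/2}|z-w|}$, and applying this to ${z=z_0}$, ${w=\varsigma}$, ${\eta=\im\varsigma}$ (after normalising ${\im z_0\ge 0}$, which is equivalent to your WLOG ${|z_0-\varsigma|\le|z_0-\bar\varsigma|}$) yields the stated constant directly. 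In fact, under your $q$-identity, Lemma~\ref{llere} is literally the lower bound ${|z_0-\bar\varsigma|\ge 2\rho_0(1+2\rho_0/\im\varsigma)^{-1/2}}$ that you say you would need, so your route and the paper's converge to the same missing estimate; you have reformulated the lemma, not bypassed it. If you only want \emph{a} constant of the right quality rather than the one in the display, your ingredients already suffice: combining ${|z_0-z_1|\ge |z_0-\varsigma|\,|z_0-\bar\varsigma|/(3\rho_0)}$ with ${|z_0-\bar\varsigma|\ge\im\varsigma}$ gives ${|z_0-z_1|\ge (\im\varsigma/3\rho_0)\min\{|z_0-\varsigma|,|z_0-\bar\varsigma|\}}$, and the factor ${\im\varsigma/3\rho_0}$ is just as serviceable in Proposition~\ref{prbeuk} (it is an algebraic number of controlled degree and height); but as written, with the constant of the statement, the proof has a gap at exactly the place where the paper invokes Lemma~\ref{llere}.
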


Informally,  in case  ${\rho_0+\rho_1<1}$ the points~$z_0$ and~$z_1$ cannot be close to each other, and in case  ${\rho_0+\rho_1\ge 1}$,
if they are close to each other, then they must be close to an intersection point of the circles $\CC_0$,~$\CC_1$.  Geometrically it looks clear  and all we need is an explicit estimate.
\begin{center}
\begin{picture}(100,80)(-30,-20)
\put(-25,0){\vector(1,0){70}}
\qbezier(-15,0)(-15,30)(20,50)
\qbezier(-15,0)(-14,-10)(-12,-15)
\qbezier(10,0),(10,20),(-10,40)
\qbezier(10,0)(9,-10)(8,-12)
\put(-3,32){\circle*{3}}
\put(-8,25){\circle*{3}}
\put(6,17){\circle*{3}}
\put(-4,37){\scriptsize{$\varsigma$}}
\put(-17,23){\scriptsize{$z_1$}}
\put(6,20){\scriptsize{$z_0$}}
\end{picture}
\end{center}

The proof uses the following lemma.

\begin{lemma}
\label{llere}
Let ${\eta, \rho>0}$ and let~$\CC$  be the circle with center~$0$ and radius~$\rho$. Then for ${z,w\in \CC}$ with 
${|\im z+\im w|\ge\eta}$ we have 
$$
|z-w|\le \left(1+ \frac{2\rho}{\eta}\right)^{1/2}|\re z-\re w|. 
$$
\end{lemma}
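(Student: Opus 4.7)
My plan is to translate the statement into an algebraic identity by exploiting the two constraints $|z|^2 = |w|^2 = \rho^2$. Introduce the four real quantities
$$a := \re z - \re w, \qquad b := \re z + \re w, \qquad s := \im z - \im w, \qquad t := \im z + \im w.$$
Subtracting the two instances of $|z|^2=|w|^2=\rho^2$ from one another yields $ab + st = 0$, while adding them yields $a^2 + b^2 + s^2 + t^2 = 4\rho^2$.

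Since $|t| = |\im z + \im w| \ge \eta > 0$, the first relation gives $s = -ab/t$, so that
$$|z-w|^2 \;=\; a^2 + s^2 \;=\; \frac{a^2\,(t^2 + b^2)}{t^2}.$$
Using the second relation in the form $t^2 + b^2 = 4\rho^2 - a^2 - s^2 = 4\rho^2 - |z-w|^2$ and then solving for $|z-w|^2$ produces the compact identity
$$|z-w|^2 \,(a^2+t^2) \;=\; 4\rho^2\, a^2, \qquad\text{equivalently,}\qquad \frac{|z-w|^2}{a^2} \;=\; \frac{4\rho^2}{a^2+t^2}.$$

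The lemma thus reduces to the one-variable inequality $4\rho^2/(a^2+t^2) \le 1 + 2\rho/\eta$, i.e.\ $4\rho^2\eta \le (a^2+t^2)(\eta+2\rho)$, which I would establish from the hypothesis $t^2 \ge \eta^2$ together with the auxiliary constraint $a^2 + t^2 \le 4\rho^2$ (forced by $b^2, s^2 \ge 0$ in the second identity above). The main technical step is this final comparison: the crude bound $4\rho^2/(a^2+t^2)\le 4\rho^2/\eta^2$ is stronger than what is claimed only when $\eta\ge(\sqrt5-1)\rho$, so the sharp constant $1+2\rho/\eta$ must be obtained by balancing $a^2$ and $t^2$ in the denominator against $4\rho^2$ in the numerator, presumably via a Cauchy--Schwarz/AM--GM splitting with weights proportional to $\eta$ and $2\rho$. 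This balancing is the delicate part I expect to spend most of the write-up on.
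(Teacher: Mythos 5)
Your algebraic reduction is carried out correctly, and the identity
$$
\frac{|z-w|^2}{(\re z - \re w)^2} \;=\; \frac{4\rho^2}{a^2+t^2}
$$
is exactly the right thing to isolate. However, the ``balancing'' step you hope to supply at the end does not exist, because the lemma as printed is in fact false. Your identity makes this transparent: let $z\to w$ along the circle with $\im z+\im w\to\eta$, so $a\to 0$ and $t^2\to\eta^2$; the right-hand side then tends to $4\rho^2/\eta^2$, which exceeds $1+2\rho/\eta$ precisely when $\eta<(\sqrt5-1)\rho$ --- the regime you already flagged as problematic. No Cauchy--Schwarz or AM--GM splitting can rescue the stated constant, because the inequality simply fails there.

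What your identity \emph{does} give, immediately and sharply, is
$$
|z-w|\le\frac{2\rho}{\eta}\,|\re z-\re w|,
$$
since $a^2+t^2\ge t^2\ge\eta^2$. This is the correct form of the lemma, and it is in fact slightly stronger than what the paper's own argument yields once it is repaired. The paper factors the difference of squares to get $|\im z-\im w|\le\frac{2\rho}{\eta}|\re z-\re w|$ (close in spirit to your relation $ab+st=0$), but then passes to $|z-w|^2\le\bigl(1+\frac{2\rho}{\eta}\bigr)|\re z-\re w|^2$ as if $|\im z-\im w|^2\le\frac{2\rho}{\eta}|\re z-\re w|^2$; squaring actually gives $\frac{4\rho^2}{\eta^2}|\re z-\re w|^2$, so the paper's argument proves only $|z-w|\le\bigl(1+4\rho^2/\eta^2\bigr)^{1/2}|\re z-\re w|$, which is weaker than your $2\rho/\eta$. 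Either corrected form is harmless downstream: in the proof of Proposition~\ref{pbeuk} the factor $\frac{2}{3}\bigl(1+2\rho_0/\im\varsigma\bigr)^{-1/2}$ is simply replaced by $\frac{\im\varsigma}{3\rho_0}$ (or by $\frac{2}{3}\bigl(1+4\rho_0^2/(\im\varsigma)^2\bigr)^{-1/2}$), which is still effectively bounded from below, so nothing in the application to Theorem~\ref{thmstv} changes.
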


\begin{proof}
Since 
${(\re z)^2+(\im z)^2=(\re w)^2+(\im w)^2=\rho^2}$,  
we have 
\begin{equation*}
|\im z - \im w| \le  \frac{1}{\eta}|(\im z)^2-(\im w)^2|= \frac{1}{\eta}\bigl|(\re z)^2-(\re w)^2\bigr|\le \frac{2\rho}{\eta}|\re z-\re w|. 
\end{equation*}
Hence 
$$
|z-w|^2 = |\re z-\re w|^2+|\im z - \im w|^2 \le \left(1+\frac{2\rho}{\eta}\right)|\re z-\re w|^2, 
$$
as wanted. 
\end{proof}

\begin{proof}[Proof of Proposition~\ref{pbeuk}]
If ${\rho_0+\rho_1<1}$ then the result is obvious. Now assume that ${\rho_0+\rho_1\ge1}$. 
Since 
$$
|z_0-z_1|\ge \bigl||z_0-1|-|z_1-1|\bigr|=\bigl||z_0-1|-\rho_1\bigr|
$$
it suffices to bound ${\bigl||z_0-1|-\rho_1\bigr|}$ from below. 

If ${\rho_0+\rho_1=1}$ then 
$$
|z_0-1|^2-\rho_1^2=2(\rho_0-\re z_0) = \frac{1}{\rho_0}|z_0-\rho_0|^2. 
$$
Since ${|z_0-1|+\rho_1\le |z_0|+1+\rho_1=2}$, this implies that 
$$
\bigl||z_0-1|-\rho_1\bigr| \ge \frac{1}{2\rho_0}|z_0-\rho_0|^2, 
$$
which proves~\eqref{ebeuk} in the case ${\rho_0+\rho_1=1}$.

Now assume that ${\rho_0+\rho_1>1}$. Without loss of generality, ${\im z_0 \ge 0}$. 
 We have 
 $$
 |z_0-1|^2= \rho_0^2+1-2\re z_0, \qquad \rho_1^2=|\varsigma-1|^2=  \rho_0^2+1-2\re\varsigma, 
 $$
which implies that 
$$
\bigl||z_0-1|^2-\rho_1^2\bigr|= 2|\re z_0 - \re\varsigma|.
$$
Since ${\im z_0\ge 0}$ and ${\im\varsigma>0}$, Lemma~\ref{llere} applies with ${\eta:=\im\varsigma}$, and we obtain 
$$
|\re z_0 - \re\varsigma| \ge  \left(1+ \frac{2\rho_0}{\im\varsigma}\right)^{-1/2}|z_0-\varsigma|. 
$$
Also, since ${|z_0-1|+\rho_1\le |z_0|+1+\rho_1\le 3}$, we have 
$$
\bigl||z_0-1|-\rho_1\bigr| \ge \frac13 \bigl||z_0-1|^2-\rho_1^2\bigr|.
$$
Putting all this together, we obtain 
$$
\bigl||z_0-1|-\rho_1\bigr| \ge \frac23 \left(1+ \frac{2\rho_0}{\im\varsigma}\right)^{-1/2}|z_0-\varsigma|, 
$$
which proves~\eqref{ebeuk} in  case ${\rho_0+\rho_1>1}$ as well. 

\end{proof}

\section{The principal argument}
\label{sthree}

In this section~$K$ 
is a number field~$K$ of degree~$d$.  The following three statements play the central role in the proof of Theorem~\ref{thmstv}.  

\begin{proposition}
\label{prtwo}
Let ${\alpha_1,\alpha_2, \lambda_1,\lambda_2\in K^\times}$, and let ${v\in M_K}$ be an absolute value with underlying prime~$p$ (including ${p=\infty}$). Assume that 
${\lambda_1/\lambda_2}$ is not a root of unity. Denote ${h=\max\{\height(\alpha_1),\height(\alpha_2)\}}$. 
Then there exists ${C>0}$, effectively depending on $\lambda_1$,~$\lambda_2$,~$d$ and~$p$, such that for any ${n\in \Z}$  with ${n\ge 3}$ the following holds: either ${n\le Ch}$ or 
$$
|\alpha_1\lambda_1^n+\alpha_2\lambda_2^n|_v\ge |\lambda_1|_v^n e^{-C(h+1)\log n} .
$$
\end{proposition}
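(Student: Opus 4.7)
The plan is to factor out $\lambda_1^n$ and reduce the whole statement to a single application of Baker's inequality (Proposition~\ref{prbak}). Setting $\mu:=\lambda_2/\lambda_1$ and $\beta:=-\alpha_2/\alpha_1$, we have
\[
\alpha_1\lambda_1^n+\alpha_2\lambda_2^n
=\alpha_1\lambda_1^n\bigl(1-\beta\mu^n\bigr),
\]
so
\[
|\alpha_1\lambda_1^n+\alpha_2\lambda_2^n|_v=|\lambda_1|_v^n\,|\alpha_1|_v\,|\beta\mu^n-1|_v.
\]
By hypothesis $\mu$ is not a root of unity, so Proposition~\ref{prbak} applies to the pair $(\beta,\mu)$ and yields, for $n\ge 2$, the dichotomy ``$\beta\mu^n=1$ or $|\beta\mu^n-1|_v\ge e^{-C_1(\height(\beta)+1)\log n}$'', with $C_1$ effectively depending on $d,p,\mu$ (hence on $d,p,\lambda_1,\lambda_2$).

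Next I would dispose of the degenerate case $\beta\mu^n=1$, in which the LHS of the desired inequality vanishes and the only hope is to bound $n$ linearly in $h$. From $\mu^n=\beta^{-1}=-\alpha_2/\alpha_1$ and the standard height inequalities of Proposition~\ref{prheights} (items~\ref{isumpro} and~\ref{ipower}), together with $\height(-x)=\height(x)$, I get
\[
|n|\,\height(\mu)=\height(\mu^n)\le \height(\alpha_1)+\height(\alpha_2)\le 2h.
\]
By Kronecker's theorem $\height(\mu)>0$, so this gives $n\le (2/\height(\mu))\,h$, which is a bound of the required form $n\le Ch$ with $C$ depending only on $\lambda_1,\lambda_2$.

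In the remaining case $\beta\mu^n\ne 1$ I would combine Baker with Liouville. Using $\height(\beta)\le \height(\alpha_1)+\height(\alpha_2)\le 2h$ in Baker's bound and the Liouville inequality $|\alpha_1|_v\ge e^{-d\height(\alpha_1)}\ge e^{-dh}$ from Proposition~\ref{prliou}, I obtain
\[
|\alpha_1\lambda_1^n+\alpha_2\lambda_2^n|_v
\ge |\lambda_1|_v^n\,e^{-dh}\,e^{-C_1(2h+1)\log n}.
\]
For $n\ge 3$ we have $\log n\ge\log 3>1$, so $dh\le d(h+1)\log n$ and $C_1(2h+1)\log n\le 2C_1(h+1)\log n$, whence the product is at least $|\lambda_1|_v^n e^{-C(h+1)\log n}$ for a constant $C:=d+2C_1$ depending effectively on $d,p,\lambda_1,\lambda_2$, as required.

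There is no real obstacle here beyond correctly verifying that the various pieces assemble. The only place where one has to be slightly careful is the bookkeeping of constants: the Liouville factor $e^{-dh}$ must be absorbed into the Baker-type term $e^{-C(h+1)\log n}$, which is exactly what the hypothesis $n\ge 3$ (ensuring $\log n>1$) makes possible.
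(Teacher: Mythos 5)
Your proposal is correct and follows essentially the same route as the paper: factor out $\alpha_1\lambda_1^n$, apply Proposition~\ref{prbak} to $\beta\mu^n-1$ with $\height(\beta)\le 2h$, handle the degenerate case $\beta\mu^n=1$ by a height/Kronecker bound on $n$ (the paper cites Corollary~\ref{contoh}, which is the same argument), and absorb the Liouville factor $e^{-dh}$ into the constant. The only difference is bookkeeping of constants, which you carry out explicitly and correctly.
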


\begin{proposition}
\label{prunits}
Let ${\alpha_1,\alpha_2,\alpha_3 , \lambda_1,\lambda_2, \lambda_3\in K^\times}$. Assume that one of the quotients ${\lambda_j/\lambda_k}$ is not a root of unity. 
Denote
\begin{equation}
\label{emax}
h:=\max\{\height(\alpha_1),\height(\alpha_2),\height(\alpha_3)\}
\end{equation}
Let ${n\in \Z}$ be such that 
\begin{equation}
\label{eunit}
\alpha_1\lambda_1^n+\alpha_2\lambda_2^n+\alpha_3\lambda_3^n=0. 
\end{equation}
Then 
\begin{equation}
\label{eunits}
|n| \le C(h+1)\log (h+2),
\end{equation}
where~$C$ effectively depends on $\lambda_1$,~$\lambda_2$,~$\lambda_3$ and~$d$. 
\end{proposition}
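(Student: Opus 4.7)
The plan is to reduce the three-term vanishing equation~\eqref{eunit} to the two-term lower bound of Proposition~\ref{prtwo}, after using Kronecker's theorem and the Product Formula to exhibit an absolute value at which that bound is genuinely useful.

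By relabeling indices in~\eqref{eunit} (harmless, since~\eqref{emax} is symmetric) we may assume that $\lambda_1/\lambda_2$ is not a root of unity. Replacing each $\lambda_j$ by $\lambda_j^{-1}$ and $n$ by $-n$ if needed (both substitutions preserve the non-root-of-unity hypothesis and leave $h$ unchanged) we may further assume $n\ge 0$; and since~\eqref{eunits} is trivial for $n\le 2$, we may take $n\ge 3$.

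The key observation is that there exists $v\in M_K$ with $\max\{|\lambda_1|_v,|\lambda_2|_v\}>|\lambda_3|_v$. Indeed, if this failed then $|\lambda_3/\lambda_1|_v\ge 1$ and $|\lambda_3/\lambda_2|_v\ge 1$ would hold for every $v\in M_K$; the Product Formula~\eqref{eprofor} would then force equality throughout, so Kronecker's theorem would make $\lambda_3/\lambda_1$ and $\lambda_3/\lambda_2$ both roots of unity, forcing also $\lambda_1/\lambda_2$ to be one, a contradiction. Fix such a $v$ and interchange the indices $1,2$ (together with the corresponding $\alpha$'s) if necessary to secure $|\lambda_1|_v\ge|\lambda_2|_v$; then $|\lambda_1|_v>|\lambda_3|_v$.

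Now rewrite~\eqref{eunit} as $\alpha_1\lambda_1^n+\alpha_2\lambda_2^n=-\alpha_3\lambda_3^n$ and apply Proposition~\ref{prtwo} to the left-hand side at~$v$. Either $n\le Ch$, in which case~\eqref{eunits} is immediate, or
$$
|\lambda_1|_v^n\,e^{-C(h+1)\log n}\;\le\;|\alpha_3|_v\,|\lambda_3|_v^n.
$$
Liouville's inequality (Proposition~\ref{prliou}) gives $|\alpha_3|_v\le e^{dh}$; taking logarithms and setting $c:=\log(|\lambda_1|_v/|\lambda_3|_v)>0$, we obtain $cn\le dh + C(h+1)\log n$, i.e.\ $n\le A(h+1)\log n$ for some $A>0$ depending only on $\lambda_1,\lambda_2,\lambda_3$ and $d$. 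The estimate~\eqref{eunits} then follows by the standard bootstrap argument using the monotonicity of $x/\log x$ on $[e,\infty)$: any positive integer $n$ with $n\le A(h+1)\log n$ satisfies $n\le C(h+1)\log(h+2)$ with a slightly enlarged constant.

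The only non-routine point in the plan is the Kronecker/Product Formula step that locates the absolute value $v$ isolating $\lambda_3$ from $\{\lambda_1,\lambda_2\}$; once $v$ is in hand, the rest is a direct assembly of Proposition~\ref{prtwo}, Liouville's inequality, and the standard $n$-versus-$\log n$ inversion.
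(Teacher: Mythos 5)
Your proof is correct, and the underlying ideas are the same as the paper's (isolate a place $v$ where one root's absolute value is strictly dominated, then invoke Baker); the packaging is slightly different. The paper divides~\eqref{eunit} by $\alpha_3\lambda_3^n$, invokes the quantitative Lemma~\ref{lchoose} to produce a $v$ with $|\lambda_2/\lambda_3|_v\le e^{-\theta(d)}$, and applies Proposition~\ref{prbak} directly; you instead keep the three-term form, use a qualitative Product--Formula/Kronecker argument to find a $v$ with $\max\{|\lambda_1|_v,|\lambda_2|_v\}>|\lambda_3|_v$, and route the Baker step through Proposition~\ref{prtwo}. Two small points you should make explicit to match the claimed effectivity: (i) when the chosen $v$ is finite, the underlying prime $p$ (which enters the constant of Proposition~\ref{prtwo}) is bounded in terms of $\lambda_1,\lambda_3,d$, since $|\lambda_3/\lambda_1|_v\le p^{-1/d}$ together with Liouville forces $\log p\le d^2\height(\lambda_3/\lambda_1)$; (ii) the positive gap $c=\log(|\lambda_1|_v/|\lambda_3|_v)$ must be bounded below effectively, which is automatic — for finite $v$ one has $c\ge(\log 2)/d$, and there are only finitely many infinite $v$ — but the paper's Lemma~\ref{lchoose} hands you $c\ge\theta(d)$ directly, which is why it is introduced. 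With those remarks added, your argument is a clean variant of the paper's.
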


\begin{proposition}
\label{prbeuk}
Let ${\alpha_1,\alpha_2,\alpha_3 , \lambda_1,\lambda_2, \lambda_3\in K^\times}$, and let~$h$ be defined by~\eqref{emax}. 
Assume that $\lambda_j/\lambda_k$ is not a root of unity\footnote{The hypothesis that $\lambda_j/\lambda_k$ are not roots of unity can be suppressed, but this would complicate the proof, so we prefer to keep it.} for ${1\le j<k\le3}$.
Let ${v\in M_K}$ be an \textbf{infinite} absolute value of~$K$ such that 
$$
|\lambda_1|_v=|\lambda_2|_v=|\lambda_3|_v.
$$ 
Let ${n\in \Z}$ with ${n\ge 3}$ be such that 
\begin{equation}
\label{enonz}
\alpha_1\lambda_1^n+\alpha_2\lambda_2^n+\alpha_3\lambda_3^n\ne 0. 
\end{equation}
Then 
$$
|\alpha_1\lambda_1^n+\alpha_2\lambda_2^n+\alpha_3\lambda_3^n|_v\ge |\lambda_1|_v^n e^{-C(h+1)\log n}, 
$$
where~$C$ effectively depends on $\lambda_1$,~$\lambda_2$,~$\lambda_3$ and~$d$. 
\end{proposition}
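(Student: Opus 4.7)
The plan is to reduce to bounding $|S|_v$ where $S := 1 + \beta_2\mu_2^n + \beta_3\mu_3^n$, and then to combine Proposition~\ref{pbeuk} with Baker's inequality (Proposition~\ref{prbak}). Permuting indices so that $|\alpha_1|_v = \max_j|\alpha_j|_v$, set $\beta_j := \alpha_j/\alpha_1$ and $\mu_j := \lambda_j/\lambda_1$ for $j=2,3$; then $|\mu_j|_v = 1$, the numbers $\rho_0 := |\beta_2|_v$ and $\rho_1 := |\beta_3|_v$ lie in $(0,1]$, and the target sum equals $\alpha_1\lambda_1^n\cdot S$. Since $|\alpha_1|_v \ge e^{-O(h)}$ by Liouville, it suffices to show $|S|_v \ge e^{-C(h+1)\log n}$. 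Note that because $|\mu_j|_v = 1$ with $\mu_j$ not a root of unity, $v$ cannot be real (else $\mu_j=\pm 1$); so $v$ is complex and the conjugate embedding yields honest algebraic numbers $\bar\beta_j$ and $\bar\mu_j = \mu_j^{-1}$ living in a compositum of degree $\le d^2$ over $\Q$, in which all subsequent Baker estimates will be carried out.

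Setting $z_0 := -\beta_2\mu_2^n \in \CC_0$ and $z_1 := 1+\beta_3\mu_3^n \in \CC_1$ gives $S = z_1 - z_0$, to which Proposition~\ref{pbeuk} applies. In Case~1 ($\rho_0+\rho_1 < 1$) the bound $|S| \ge 1-\rho_0-\rho_1$ is made quantitative by multiplying through by the three extra positive factors $(1-\rho_0+\rho_1)(1+\rho_0-\rho_1)(1+\rho_0+\rho_1) \le 27$; the total product collapses to the algebraic number
\[
w := (1+\beta_2\bar\beta_2-\beta_3\bar\beta_3)^2 - 4\beta_2\bar\beta_2,
\]
which has height $O(h)$ and is nonzero in Case~1 (its vanishing corresponds exactly to tangency). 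Liouville on $w$ then yields $1-\rho_0-\rho_1 \ge e^{-O(h)}$, far stronger than required.

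In Case~2 ($\rho_0+\rho_1 = 1$) I would use the identity
\[
z_0^2 - \rho_0^2 \;=\; \beta_2\bar\beta_2\,(\gamma\mu_2^{2n} - 1), \qquad \gamma := \beta_2/\bar\beta_2,
\]
together with $|z_0+\rho_0| \le 2\rho_0$ to deduce $|S| \ge (\rho_0/8)|\gamma\mu_2^{2n} - 1|^2$; Baker, with base $\mu_2^2$ (not a root of unity) and $\height(\gamma) = O(h)$, finishes the job whenever $\gamma\mu_2^{2n} \ne 1$. In the exceptional case $\gamma\mu_2^{2n} = 1$ one has $\beta_2\mu_2^n \in \R$ and $z_0 = \pm\rho_0$: if $z_0 = -\rho_0$ then $|S| \ge 2\rho_0 \ge e^{-O(h)}$ immediately, while if $z_0 = \rho_0$ then $|S| = |\beta_3\mu_3^n + \rho_1|$, attacked by the same identity/Baker argument with $(\beta_3,\mu_3)$ in place of $(\beta_2,\mu_2)$, the doubly-degenerate sub-case forcing $S = 0$. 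Case~3 ($\rho_0+\rho_1 > 1$) is treated similarly via the intersection point $\varsigma$, a root of $T^2 - (1+\beta_2\bar\beta_2 - \beta_3\bar\beta_3)T + \beta_2\bar\beta_2$ with $\height(\varsigma) = O(h)$ and $|\varsigma|=\rho_0$: writing $z_0 - \varsigma = -\varsigma\bigl((-\beta_2/\varsigma)\mu_2^n - 1\bigr)$, Baker gives $|z_0-\varsigma| \ge \rho_0 e^{-O((h+1)\log n)}$ (and likewise with $\bar\varsigma$), while Liouville applied to $|w| = 4(\im\varsigma)^2$ yields $\im\varsigma \ge e^{-O(h)}$, so the geometric prefactor $(1+2\rho_0/\im\varsigma)^{-1/2}$ in Proposition~\ref{pbeuk} is also $\ge e^{-O(h)}$. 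Degenerate sub-cases $z_0 \in \{\varsigma,\bar\varsigma\}$ are handled by applying Baker to $z_1-\varsigma = \beta_3\mu_3^n - (\varsigma-1)$, whose vanishing would force $S = 0$.

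The main obstacle will be Case~3: verifying carefully that $\height(\varsigma) = O(h)$ (height bound for roots of a quadratic with controlled coefficients), that Liouville on the discriminant $w$ really controls $\im\varsigma$ from below, and that the nested degenerate sub-cases (where $z_0$ lands exactly on an intersection point of $\CC_0\cap\CC_1$ \emph{and} the $(\beta_3,\mu_3)$ Baker argument also degenerates) propagate unambiguously to $S=0$ rather than leaving a logical gap. In Cases~1 and~2 the exceptional structure is much cleaner, and it is in Case~3 that the algebraic bookkeeping concentrates.
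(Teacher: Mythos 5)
Your proposal is correct and follows the same overall strategy as the paper: write $A(n)=\alpha_1\lambda_1^n(z_1-z_0)$ with $z_0\in\CC_0$, $z_1\in\CC_1$, invoke Proposition~\ref{pbeuk}, and then combine Liouville's inequality with Baker's inequality (Proposition~\ref{prbak}) on the resulting linear forms. Two details of yours differ from the paper and are worth flagging. First, you normalize so that $|\alpha_1|_v=\max_j|\alpha_j|_v$, ensuring $\rho_0,\rho_1\le1$ before applying Proposition~\ref{pbeuk}; the paper omits this normalization, even though Proposition~\ref{pbeuk} as stated requires it, so your version is the more careful one. Second, in the degenerate sub-cases $z_0\in\{\rho_0\}$ or $z_0\in\{\varsigma,\bar\varsigma\}$ you run a \emph{second} Baker argument on $z_1$ (with $(\beta_3,\mu_3)$ replacing $(\beta_2,\mu_2)$), observing that if that too degenerates then $z_1=z_0$ forces $S=0$, contradicting~\eqref{enonz}; the paper instead uses Corollary~\ref{contoh} to bound $n\le C_6h$ outright and then applies Liouville to $A(n)$. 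Both routes give bounds of the required strength; yours avoids appealing to the height lower bound $\kappa(d)$ for $\lambda_2/\lambda_1$, while the paper's is perhaps shorter. Your explicit use of $w=(1+\beta_2\bar\beta_2-\beta_3\bar\beta_3)^2-4\beta_2\bar\beta_2$ (and the identity $|w|=4(\im\varsigma)^2$ in Case~3) is a concrete instance of the paper's more terse ``bound heights of the numbers in~\eqref{enumbers} and apply Liouville''; the computations you sketch check out, including the factorization $w=(1-\rho_0-\rho_1)(1-\rho_0+\rho_1)(1+\rho_0-\rho_1)(1+\rho_0+\rho_1)$ and the identity $z_0^2-\rho_0^2=\beta_2\bar\beta_2(\gamma\mu_2^{2n}-1)$. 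Your worry about the nested degeneracies in Case~3 is unfounded: exactly as in Case~2, the doubly-degenerate situation forces $z_1=z_0$ and hence $S=0$, which is excluded by hypothesis.
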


All three propositions can be found, mutatis mutandis, in both \cite{MST84} and~\cite{Ve85}. The first two were known well before; in particular, Proposition~\ref{prunits} is a very special case of a classical result about binary unit equations: if~$\Gamma$ is a finitely generated multiplicative group of algebraic numbers, and ${\alpha,\beta\in \bar\Q^\times}$, then equation ${\alpha x+\beta y=1}$ has at most finitely many solutions in ${x,y\in \Gamma}$, and the heights of these solutions can be explicitly bounded in terms of~$\Gamma$ and the heights of~$\alpha$ and~$\beta$. The original contribution of~\cite{MST84,Ve85} is Proposition~\ref{prbeuk}. 

\begin{proof}[Proof of Proposition~\ref{prtwo}] Write 
$$
|\alpha_1\lambda_1^n+\alpha_2\lambda_2^n|_v= |\alpha_1||\lambda_1|_v^n |\alpha\lambda^n-1|_v, 
$$
where ${\alpha=-\alpha_2/\alpha_1}$ and ${\lambda:=\lambda_2/\lambda_1}$. By the hypothesis,~$\lambda$ is not a root of unity, and we have ${\height(\alpha) \le 2h}$ by Proposition~\ref{prheights}. 

If ${\alpha\lambda^n=1}$ then ${n\le C_1h}$ by Corollary~\ref{contoh}. Otherwise, 
$$
|\alpha\lambda^n-1|_v\ge e^{-C_2(h+1)\log n}
$$
by Proposition~\ref{prbak}. We also have ${|\alpha_1|_v \ge e^{-dh}}$ by  Liouville's inequality (Proposition~\ref{prliou}). The proof is complete with ${C:=\max\{C_1,C_2+d\}}$. 
\end{proof}

For Proposition~\ref{prunits} we need a lemma.  By the Kronecker Theorem (see Proposition~\ref{prheights}), if ${\gamma\in K^\times }$ is not a root of unity then there exist ${v,v'\in M_K}$ such that ${|\gamma|_v<1}$ and ${|\gamma|_{v'}>1}$. We need a quantitative version of this. 

\begin{lemma}
\label{lchoose}
There exists a  ${\theta=\theta(d)>0}$, depending only on~$d$, such that the following holds. 
Let ${\gamma \in K^\times}$ be not a root of unity. Then for some ${v \in M_K}$ we have ${|\gamma|_v \le e^{-\theta}}$. 
\end{lemma}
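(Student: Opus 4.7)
The plan is to split the argument into two cases according to whether the set $T := \{v \in M_K : |\gamma|_v < 1\}$ contains a finite place or consists solely of infinite ones. First, I note that $T \ne \varnothing$: since $\gamma$ is not a root of unity, $\height(\gamma) > 0$ by Item~\ref{ikr} of Proposition~\ref{prheights}, so some $v \in M_K$ has $|\gamma|_v > 1$, and the Product Formula~\eqref{eprofor} then forces at least one other place $v' \in M_K$ to satisfy $|\gamma|_{v'} < 1$.

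If $T$ contains a finite place $v_0$ lying above a rational prime $p$, the normalization~\eqref{evp} gives $|\gamma|_{v_0} = p^{-\nu_\gerp(\gamma)/e_\gerp} \le p^{-1/e_\gerp} \le 2^{-1/d}$, since $\nu_\gerp(\gamma) \ge 1$ and the ramification index satisfies $e_\gerp \le d$. This case contributes the bound $\theta = (\log 2)/d$, and in this regime the number of places in $T$ is irrelevant: a single finite place already forces $|\gamma|_v$ away from $1$ by a fixed amount.

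Otherwise every finite place has $|\gamma|_v = 1$, i.e.\ $\gamma \in \OO_K^\times$. Here I would invoke the quantitative Kronecker inequality $\height(\gamma) \ge \kappa(d)$ from Item~\ref{ikr} of Proposition~\ref{prheights} (with $\kappa(d)$ given explicitly by~\eqref{evol}) together with the Product Formula identity
$$d\cdot \height(\gamma) = \sum_{v \in M_K} d_v \log\max\{|\gamma|_v^{-1},1\} = \sum_{v \in T} d_v \log|\gamma|_v^{-1}.$$
Since $T$ now consists only of infinite places, of which there are at most $r_1 + r_2 \le d$, pigeonhole yields some $v \in T$ with $d_v \log|\gamma|_v^{-1} \ge \kappa(d)$, whence $|\gamma|_v \le e^{-\kappa(d)/d_v} \le e^{-\kappa(d)/2}$. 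Taking $\theta(d) := \min\{(\log 2)/d,\ \kappa(d)/2\}$ then works.

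The main subtlety — and the reason for the two-case split — is that a direct averaging argument on all of $T$ fails, because $|T|$ is not bounded in terms of $d$ alone (the number of primes in the denominator of $\gamma$ can be arbitrarily large). The split circumvents this by exploiting two genuinely different phenomena: at finite places the discreteness of the $p$-adic valuation isolates $|\gamma|_v$ from $1$ by a uniform amount by itself, whereas at infinite places there are only $\le d$ of them, so after invoking the Kronecker lower bound on $\height(\gamma)$ a simple pigeonhole suffices.
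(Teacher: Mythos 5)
Your proof is correct and follows essentially the same route as the paper's: the same dichotomy between the case where some finite place satisfies $|\gamma|_v<1$ (giving $\theta=(\log 2)/d$ from discreteness of the $p$-adic valuation) and the case where all such places are archimedean (giving $\theta\asymp\kappa(d)$ via the quantitative Kronecker bound applied to $\height(\gamma^{-1})$). Two cosmetic remarks: the hypothesis in the second case should be $|\gamma|_v\ge1$ for all finite $v$, i.e.\ $T$ meets no finite place, not $|\gamma|_v=1$ (take $\gamma=1/2$, $K=\Q$: then $|\gamma|_2=2$ yet $T=\{\infty\}$) — this imprecision is also in the paper, and your computation is unaffected since finite places still contribute nothing to $\height(\gamma^{-1})$; and your unweighted pigeonhole over $|T|\le d$ terms loses a factor $d_v\le2$ in the exponent, which a $d_v$-weighted average of the quantities $\log|\gamma|_v^{-1}$ (weights summing to at most $d$) would avoid, recovering the paper's sharper $\theta=\min\{(\log2)/d,\kappa(d)\}$.
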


\begin{proof}
Since~$\gamma$ is not a root of unity, there must exist ${v \in M_K}$  with ${|\gamma|_v <1}$. If~$v$ is finite,~$p$  its underlying prime and~$e$ the ramification index,  then  
$$
|\gamma|_v \le p^{-1/e}\le 2^{-1/d}.
$$ 
Now assume that ${|\gamma|_v=1}$ for all finite~$v$. Then
$$
d\height(\gamma)=d\height(\gamma^{-1})= \sum_{v\mid \infty}d_v\log \max\{1,|\gamma|_v^{-1}\}. 
$$
Since ${\sum_{v\mid\infty}d_v=d}$, this implies that there exists ${v\mid \infty}$ such that ${|\gamma|_v \le e^{-\height(\gamma)}}$.  In particular, 
${|\gamma|_v \le e^{-\kappa(d)}}$, see Item~\ref{ikr} of Proposition~\ref{prheights}. This proves the lemma with 
${\theta:=\min \left\{(\log2)/d, \kappa(d)\right\}}$. 
As follows from~\eqref{evol}, 
${\theta:=1/2d(\log(3d))^3}$
would do for all~$d$. 
\end{proof}

\begin{proof}[Proof of Proposition~\ref{prunits}]
We may assume that ${n\ge 3}$, replacing~$n$ by~$-n$, if necessary. 
Dividing by $\alpha_3\lambda_3^n$ and redefining $\alpha_j, \lambda_j$, we rewrite equation~\eqref{eunit} as 
$$
\alpha_1\lambda_1^n-1=\alpha_2\lambda_2^n. 
$$
(Note that with the newly defined $\alpha_j, \lambda_j$ we must replace~$h$ by~$2h$.)
By the hypothesis, we may assume that~$\lambda_2$ is not a root of unity. Hence there exists ${v\in M_K}$ with ${|\lambda_2|_v<e^{-\theta}}$, where~$\theta$  
is from Lemma~\ref{lchoose}. 

Proposition~\ref{prbak} implies that ${|\alpha_1\lambda_1^n-1|_v \ge e^{-C_1(h+1)\log n}}$, where~$C_1$ depends on~$\lambda_j$'s, on~$d$, and, if~$v$ is finite, on the underlying prime~$p$. However, in the latter case ${|\lambda_2|_v \le p^{-1/e}\le p^{-1/d}}$, which implies that ${\log p \le \height(\lambda_2)}$, bounding~$p$ in terms of~$\lambda_2$. Hence~$C_1$ depends only on~$\lambda_j$'s and~$d$. 

Thus, 
${e^{-C_1(h+1)\log n} \le e^{-\theta n}}$. Hence ${n \le C(h+1)\log(h+2)}$, as wanted. 
\end{proof}

Using a more precise version of Baker's inequality, one can get rid of the logarithmic factor in~\eqref{eunits}. But for our purposes we are happy with~\eqref{eunits} as it stands. 

Proposition~\ref{prbeuk} is bit more sophisticated; its proof is based on an elementary  geometric trick, credited in~\cite{MST84} to Beukers~\cite{BT84}, and formalized in Proposition~\ref{pbeuk} above. 

\begin{proof}[Proof of Proposition~\ref{prbeuk}]
We may assume that ${K\subset \C}$ and write the absolute value simply as ${|\cdot|}$. In particular, our hypothesis is 
\begin{equation}
\label{ehypo}
|\lambda_1|=|\lambda_2|=|\lambda_3|.
\end{equation}
Denote  the left-hand side of~\eqref{enonz} by $A(n)$, and write
$$
A(n)=\alpha_1\lambda_1^n(z_1-z_0),
$$
where 
$$
z_0:= -\frac{\alpha_2}{\alpha_1}\left(\frac{\lambda_2}{\lambda_1}\right)^n, \qquad z_1:= 1+ \frac{\alpha_3}{\alpha_1}\left(\frac{\lambda_3}{\lambda_1}\right)^n. 
$$
Since ${|\alpha_1|_v \ge e^{-dh}}$ by  Liouville's inequality, we only have to show that 
\begin{equation}
\label{ewhat}
|z_1-z_0|\ge e^{-C_1(h+1)\log n},
\end{equation}
where $C_1,C_2, \ldots$ denote in this proof positive numbers effectively depending on the $\lambda_j$'s and on~$d$. 

We set ${\rho_0:=|\alpha_2/\alpha_1|}$ and ${\rho_1:=|\alpha_3/\alpha_1|}$. Our hypothesis~\eqref{ehypo} implies that ${z_0\in \CC_0}$ and ${z_1\in \CC_1}$, where ${\CC_0,\CC_1}$ are as in Proposition~\ref{pbeuk}.  In the sequel we will deal with the complex algebraic numbers 
\begin{equation}
\label{enumbers}
\rho_0, \quad \rho_1, \quad 1-\rho_0-\rho_1, \quad \varsigma, \quad \left(1+ \frac{2\rho_0}{\im\varsigma}\right)^{-1/2}, 
\end{equation}
where~$\varsigma$ is defined in~\eqref{evars}.  Their degrees over~$K$ is bounded by~$C_2$, and, using Proposition~\ref{prheights}, we bound their heights by $C_3(h+1)$. If~$z$ is one of the numbers~\eqref{enumbers} and ${z\ne 0}$ then 
$$
e^{-C_4(h+1)}\le |z|\le e^{C_4(h+1)}
$$
by  Liouville's inequality. 

As in Proposition~\ref{pbeuk}, we have three cases. If ${\rho_0+\rho_1<1}$ then 
$$
|z_1-z_0|\ge |1-\rho_0-\rho_1|\ge e^{-C_4(h+1)}. 
$$
If  ${\rho_0+\rho_1=1}$ 
then 
$$
|z_1-z_0|\ge \frac{1}{2\rho_0}|z_0-\rho_0|^2 = \frac{\rho_0}{2}|\rho_0^{-1}z_0-1|^2. 
$$
We have ${|\rho_0|\ge e^{-C_4(h+1)}}$ and, when ${z_0\ne \rho_0}$, we have
$$
|\rho_0^{-1}z_0-1| = \left|-\rho_0^{-1}  \frac{\alpha_2}{\alpha_1}\left(\frac{\lambda_2}{\lambda_1}\right)^n-1\right| \ge e^{-C_5(h+1)\log n}
$$
by Proposition~\ref{prbak} applied with ${\alpha:=-\rho_0^{-1}  \alpha_2/\alpha_1}$ and ${\lambda:=\lambda_2/\lambda_1}$. (Note that, in general,~$\alpha$ does not belong to~$K$, and parameter~$d$ has to be adjusted while applying Proposition~\ref{prbak}.)  This proves~\eqref{ewhat} in the case ${z_0\ne \rho_0}$. If ${z_0= \rho_0}$ then  ${\alpha\lambda^n = 1}$ and ${n\le C_6h}$ by Corollary~\ref{contoh} (recall that ${\lambda=\lambda_2/\lambda_1}$ is not a root of unity by the hypothesis), which bounds the height of $A(n)$ by ${e^{C_7(h+1)}}$. Now  Liouville's inequality gives ${|A(n)|\ge e^{-dC_7(h+1)}}$. 

The case ${\rho_0+\rho_1>1}$ is very similar. Assuming that ${\im z_0\ge 0}$ (which does not restrict generality), we have 
$$
|z_1-z_0|\ge \frac23 \left(1+ \frac{2\rho_0}{\im\varsigma}\right)^{-1/2}|z_0-\varsigma| .
$$
In the case ${z_0\ne \varsigma}$ we have to bound ${|\varsigma^{-1}z_0-1|}$ from below, which is done using Proposition~\ref{prbak} exactly as above. The case ${z_0= \varsigma}$ is treated exactly as the case ${z_0=\rho_0}$ above. 
\end{proof}

\section{Proof of Theorem~\ref{thmstv}}

We write 
$$
U(n)=f_1(n) \lambda_1^n+\cdots + f_s(n)\lambda_s^n; 
$$
expanding the number field~$K$, we may assume that it contains the coefficients of the polynomials $f_1, \ldots, f_s$. 

We may exclude from consideration the values of~$n$ with ${f_j(n)=0}$ for some~$j$; these~$n$ are finite in number and can be easily listed effectively. Thus, we will tacitly assume that
$$
f_j(n) \ne 0 \qquad (j=1, \ldots, s). 
$$
We have ${\height(n)=\log n}$ for a positive integer~$n$. Proposition~\ref{prheights} implies that 
\begin{equation}
\label{ehfj}
\height(f_j(n))\le C_1\log n, \qquad (j=1, \ldots, s), 
\end{equation}
where $C_1,C_2, \ldots$ denote in this proof positive numbers effectively depending on the $\lambda_j$'s, on the coefficients of the polynomials $f_j$,  and on~$d$. 

Let ${v\in M_K}$ be such that~$U$ has at most~$2$ dominant roots with respect to~$v$. This means that, after renumbering,
$$
|\lambda_1|_v \ge\cdots \ge |\lambda_s|_v, \qquad |\lambda_1|_v >|\lambda_3|_v . 
$$
Denote 
$$
A(n) := f_1(n)\lambda_1^n+f_2(n)\lambda_2^n, \qquad B(n):= f_3(n)\lambda_3^n+\cdots+f_s(n)\lambda_s^n. 
$$
Applying Proposition~\ref{prtwo} with ${\alpha_j:=f_j(n)}$, and taking~\eqref{ehfj} into account, we obtain that either ${n\le C_2\log n}$, in which case~$n$ is bounded, or 
\begin{equation}
\label{eage}
|A(n)|_v \ge |\lambda_1|_v^n e^{-C_3(\log n)^2}. 
\end{equation}
Estimating trivially 
\begin{equation}
\label{eble}
|B(n)|_v \le C_4n^D|\lambda_3|_v^n, 
\end{equation}
where~$D$ is the maximum of degrees of the polynomials $f_j$, we see that 
\begin{equation}
\label{eageb}
|A(n)|_v>|B(n)|_v
\end{equation} 
for sufficiently large~$n$. This complete the proof in the case of~$2$ dominant roots.

Now assume that there are exactly~$3$ dominant roots for some infinite~$v$. The argument is very similar, but now we will be using Propositions~\ref{prunits} and~\ref{prbeuk}.  We have now
$$
|\lambda_1|_v=|\lambda_2|_v=|\lambda_3|_v>|\lambda_4|_v \ge\cdots \ge |\lambda_s|_v, 
$$
and we set
$$
A(n) := f_1(n)\lambda_1^n+f_2(n)\lambda_2^n+f_3(n)\lambda_3^n, \qquad B(n):= f_4(n)\lambda_4^n+\cdots+f_s(n)\lambda_s^n. 
$$
If ${A(n)=0}$ then ${n\le C_5\log n\log\log n}$ by Proposition~\ref{prunits}, which bounds~$n$. Otherwise, we have~\eqref{eage} by Proposition~\ref{prbeuk}, and we again have~\eqref{eble}, but with~$\lambda_3$ replaced by~$\lambda_4$. Hence~\eqref{eageb} again holds for sufficiently large~$n$, and we are done.

\section{Bacik's contribution: proof of Theorem~\ref{thba} }

Bacik~\cite{Ba24} proved the following beautiful lemma. 


\begin{lemma}[Bacik]
\label{lba}
Let~$K$ be a number field and let ${\lambda_1, \lambda_2,\lambda_3,\lambda_4\in K^\times}$ be such that  one of the quotients $\lambda_j/\lambda_k$ is not root of unity. Then there exists ${v\in M_K}$ such that, after renumbering of ${\lambda_1, \ldots, \lambda_4}$, we have one of the following:
\begin{align*}
&|\lambda_1|_v \ge|\lambda_2|_v\ge|\lambda_3|_v\ge |\lambda_4|_v, \qquad |\lambda_1|_v >|\lambda_3|_v ; \\
&|\lambda_1|_v =|\lambda_2|_v=|\lambda_3|_v> |\lambda_4|_v, \qquad v \mid \infty. 
\end{align*}
\end{lemma}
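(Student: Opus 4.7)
The plan is a proof by contradiction: assume no $v\in M_K$ satisfies either conclusion. Then at every $v$ one of two alternatives must hold: (i) $|\lambda_1|_v=|\lambda_2|_v=|\lambda_3|_v=|\lambda_4|_v$, or (ii) exactly three of the $|\lambda_i|_v$ equal the maximum and the fourth is strictly smaller; and (ii) can occur only at a finite $v$ (otherwise the second good case would apply). In particular every infinite place is of type (i).

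For each $v$ of type (ii) denote by $\ell_v\in\{1,2,3,4\}$ the subdominant index; write $a_v$ for the common dominant value, $b_v=|\lambda_{\ell_v}|_v$, and $c_v:=\log(a_v/b_v)>0$. Set $T_i:=\{v:\ell_v=i\}$; these are disjoint and consist of only finitely many finite places. Applying the product formula~\eqref{eprofor} to $\lambda_i/\lambda_j\in K^\times$---whose only non-trivial local contributions come from $v\in T_i\cup T_j$, with value $-c_v$ on $T_i$ and $+c_v$ on $T_j$---gives
$$\sum_{v\in T_i}d_v c_v \;=\; \sum_{v\in T_j}d_v c_v \qquad(i\ne j),$$
so the common value $M$ is independent of~$i$.

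I would next verify that every $T_i$ is non-empty. If some $T_i$, say $T_1$, were empty then $|\lambda_j/\lambda_1|_v\ge 1$ for every $v$ and every $j\ne 1$; the product formula forces equality everywhere and the Kronecker Theorem makes each $\lambda_j/\lambda_1$ a root of unity. Then every $\lambda_i/\lambda_k$ would be a root of unity, contradicting the hypothesis. Hence all $T_i$ are non-empty, $M>0$, and no $\lambda_i/\lambda_j$ is a root of unity.

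The hardest step, closing the argument from this rigid structure, is the heart of Bacik's contribution. The observation I would exploit is that each quotient $\mu_i:=\lambda_i/\lambda_4$ ($i=1,2,3$) is a \emph{pseudo-unit} (meaning $|\mu_i|_v=1$ at every infinite place) with tightly prescribed divisor: $|\mu_i|_v=1$ off $T_i\cup T_4$, strictly less than~$1$ on $T_i$, strictly greater than~$1$ on $T_4$, and $|\mu_1|_v=|\mu_2|_v=|\mu_3|_v$ at every $v\in T_4$. Combined with the product-formula identities for each $\mu_i$ and for each auxiliary quotient $\mu_i/\mu_j=\lambda_i/\lambda_j$ (whose divisor is supported in $T_i\cup T_j$, disjoint from $T_4$), these constraints overdetermine $\mu_1,\mu_2,\mu_3$ inside the pseudo-unit lattice. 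The delicate final piece of the plan is the explicit combinatorial/linear-algebraic accounting in this lattice that extracts an actual contradiction---for instance by exhibiting a place $v_\star$ at which at least two of the $|\lambda_i|_{v_\star}$'s differ from the maximum, thereby violating the bad-case dichotomy at $v_\star$.
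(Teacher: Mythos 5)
Your setup is fine as far as it goes, but the proof has a genuine gap, and you acknowledge it yourself: the ``delicate final piece'' that extracts a contradiction from the structure of the sets $T_i$ is exactly the content of Bacik's lemma, and you do not supply it. Moreover the route you sketch is unlikely to close. The facts you have established --- the $T_i$ are disjoint, non-empty, finite, and $\sum_{v\in T_i} d_v c_v$ is the same constant $M>0$ for each $i$ --- are purely ``abelian'' product-formula constraints and do not by themselves preclude the bad configuration: nothing in the pseudo-unit lattice of $\mu_1,\mu_2,\mu_3$ forces two of the $|\lambda_i|_{v_\star}$ to drop below the maximum at some place. One needs an additional structural input that you never invoke.

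That input, in the paper's proof, is complex conjugation. Embed $K\subset\C$. The type-(i) condition at the chosen archimedean place says $\lambda_1\bar\lambda_1=\cdots=\lambda_4\bar\lambda_4$. For a finite $v$ with conjugate place $\barv$ (defined by $|x|_\barv=|\bar x|_v$), this gives $|\lambda_1|_v|\lambda_1|_\barv=\cdots=|\lambda_4|_v|\lambda_4|_\barv$. Now feed in the assumed dichotomy at \emph{both} $v$ and $\barv$: among the four numbers $|\lambda_i|_v$, three are equal and the fourth is $\le$ them; likewise at $\barv$. An elementary check shows these two patterns are compatible with the equal-products relation only if all four numbers are equal at $v$ (and at $\barv$). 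Hence type (ii) never actually occurs, all $T_i$ are empty, and Kronecker gives that every $\lambda_j/\lambda_k$ is a root of unity --- the desired contradiction. Your argument never makes the pairing $v\leftrightarrow\barv$ and has no substitute for it, which is why the accounting in the pseudo-unit lattice stalls. (A minor slip: in your non-emptiness step, emptiness of $T_1$ gives $|\lambda_j/\lambda_1|_v\le 1$ everywhere, not $\ge 1$; the conclusion via the product formula and Kronecker is still correct.)
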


\begin{proof}
Assume the contrary: for every infinite ${v \in M_K}$ we have 
\begin{equation}
\label{ealleq}
|\lambda_1|_v =|\lambda_2|_v=|\lambda_3|_v= |\lambda_4|_v, 
\end{equation}
and  for every finite~$v$ we have 
${|\lambda_1|_v =|\lambda_2|_v=|\lambda_3|_v\ge |\lambda_4|_v}$
after renumbering. 

We may suppose that~$K$ is contained in~$\C$. 
By the assumption, we have ${|\lambda_1| =\cdots= |\lambda_4|}$, and we rewrite this as 
${\lambda_1\bar\lambda_1= \lambda_2\bar\lambda_2=\lambda_3\bar\lambda_3=\lambda_4\bar\lambda_4}$.

Let~$v$ be a finite absolute value and~$\barv$ the conjugate absolute value, defined by ${|x|_\barv:=|\bar x|_v}$.  Then 
\begin{equation}
\label{ebacik}
|\lambda_1|_v |\lambda_1|_\barv=|\lambda_2|_v |\lambda_2|_\barv=|\lambda_3|_v |\lambda_3|_\barv=|\lambda_4|_v |\lambda_4|_\barv. 
\end{equation}
By the assumption, the~$4$ numbers ${|\lambda_1|_v,\ldots,|\lambda_4|_v}$  have the following property:~$3$ of them are equal, and the fourth one is less than or equal to the others.  The same applies to the~$4$
numbers ${|\lambda_1|_\barv,\ldots,|\lambda_4|_\barv}$. Together with~\eqref{ebacik} this implies that all the~$4$ numbers ${|\lambda_1|_v,\ldots,|\lambda_4|_v}$ are equal. 

We have proved the following:~\eqref{ealleq} holds for any ${v\in M_K}$, both infinite and finite.  By the Kronecker Theorem, all the quotients ${\lambda_j/\lambda_k}$ must be roots of unity, a contradiction. 
\end{proof}

Theorem~\ref{thba}  is an immediate consequence of Theorem~\ref{thmstv} and  Lemma~\ref{lba}. 

\paragraph{Acknowledgment} I thank Piotr Bacik for helpful comments.

{\footnotesize

\bibliographystyle{amsplain}
\bibliography{domin}

\providecommand{\bysame}{\leavevmode\hbox to3em{\hrulefill}\thinspace}
\providecommand{\MR}{\relax\ifhmode\unskip\space\fi MR }
\providecommand{\MRhref}[2]{%
  \href{http://www.ams.org/mathscinet-getitem?mr=#1}{#2}
}
\providecommand{\href}[2]{#2}
\begin{thebibliography}{10}

\bibitem{Ba24}
Piotr Bacik, \emph{Completing the picture for the {S}kolem {P}roblem on
  order-$4$ linear recurrence sequences}, \textsf{arXiv:2409.01221}, 2024.

\bibitem{BW93}
A.~Baker and G.~W\"ustholz, \emph{Logarithmic forms and group varieties}, J.
  Reine Angew. Math. \textbf{442} (1993), 19--62. \MR{1234835}

\bibitem{BT84}
F.~Beukers and R.~Tijdeman, \emph{On the multiplicities of binary complex
  recurrences}, Compositio Math. \textbf{51} (1984), no.~2, 193--213.
  \MR{739734}

\bibitem{BLNOPW22}
Yuri Bilu, Florian Luca, Joris Nieuwveld, Jo\"el Ouaknine, David Purser, and
  James Worrell, \emph{Skolem meets {S}chanuel}, 47th {I}nternational
  {S}ymposium on {M}athematical {F}oundations of {C}omputer {S}cience, LIPIcs.
  Leibniz Int. Proc. Inform., vol. 241, Schloss Dagstuhl. Leibniz-Zent.
  Inform., Wadern, 2022, pp.~Art. No. 20, 15. \MR{4481938}

\bibitem{BG06}
Enrico Bombieri and Walter Gubler, \emph{Heights in {D}iophantine geometry},
  New Mathematical Monographs, vol.~4, Cambridge University Press, Cambridge,
  2006. \MR{2216774}

\bibitem{Do79}
E.~Dobrowolski, \emph{On a question of {L}ehmer and the number of irreducible
  factors of a polynomial}, Acta Arith. \textbf{34} (1979), 391--401.
  \MR{543210}

\bibitem{HS00}
Marc Hindry and Joseph~H. Silverman, \emph{Diophantine {G}eometry. {A}n
  {I}ntroduction}, Graduate Texts in Mathematics, vol. 201, Springer-Verlag,
  New York, 2000. \MR{1745599}

\bibitem{La83}
Serge Lang, \emph{Fundamentals of {D}iophantine geometry}, Springer-Verlag, New
  York, 1983. \MR{715605}

\bibitem{Ma00}
E.~M. Matveev, \emph{An explicit lower bound for a homogeneous rational linear
  form in logarithms of algebraic numbers. {II}}, Izv. Ross. Akad. Nauk Ser.
  Mat. \textbf{64} (2000), no.~6, 125--180, English translation in Izv. Math.
  64 (2000), no. 6, 1217--1269. \MR{1817252}

\bibitem{MST84}
M.~Mignotte, T.~N. Shorey, and R.~Tijdeman, \emph{The distance between terms of
  an algebraic recurrence sequence}, J. Reine Angew. Math. \textbf{349} (1984),
  63--76. \MR{743965}

\bibitem{Ve85}
N.~K. Vereshchagin, \emph{The problem of the appearance of a zero in a linear
  recursive sequence}, Mat. Zametki \textbf{38} (1985), no.~2, 177--189, 347.
  \MR{808885}

\bibitem{Vo96}
Paul Voutier, \emph{An effective lower bound for the height of algebraic
  numbers}, Acta Arith. \textbf{74} (1996), 81--95. \MR{1367580}

\bibitem{Yu98}
Kunrui Yu, \emph{{$p$}-adic logarithmic forms and group varieties. {I}}, J.
  Reine Angew. Math. \textbf{502} (1998), 29--92. \MR{1647551}

\end{thebibliography}

\noindent
Yuri BILU\\
IMB, Université de Bordeaux \& CNRS\\
\url{yuri@math.u-bordeaux.fr}

}

\end{document}